\definecolor{capri}{rgb}{0.2, 0.45, 1.0}
\DeclareMathOperator*{\limone}{lim^{\rlap{\scriptsize 1}}}
\newcommand{\N}{\mathbb{N}}
\theoremstyle{remark}
\newtheorem{remark}{Remark}[section]
\theoremstyle{plain}
\newtheorem{theorem}{Theorem}[section]
\newtheorem{lemma}[theorem]{Lemma}
\newtheorem{proposition}[theorem]{Proposition}
\newtheorem{definition-theorem}[theorem]{Definition / Theorem}
\theoremstyle{definition}
\newtheorem{definition}[theorem]{Definition}
\newcommand{\addresses}{
		\bigskip
		\footnotesize
		
		A.~Jaime, \textsc{Department of Mathematics, University of Hawai'i at M{\=a}noa,
			Honolulu, Hawai'i 96822}\par\nopagebreak
		\textit{E-mail address}: \texttt{ajaime@hawaii.edu}
		
}
\title{Topology of $KK$-theory via inverse limits of discrete abelian groups}
\author{Arturo Jaime}
\date{}
\begin{document}

\maketitle

\begin{abstract}
	This paper seeks to characterize some topological properties of pro-countable topological abelian groups. Using the Milnor exact sequence given by the controlled picture of $KK$-theory by Willett and Yu, we describe some topological properties of the topological group $KK(A,B)$ with respect to the satisfaction of Mittag-Leffler and stability conditions of certain inverse systems.
\end{abstract}

\vspace{5mm}

\tableofcontents

\vspace{10mm}

\section{Introduction}

Generalizing both $K$-theory and $K$-homology of $C^{*}$-algebras, the Kasparov groups, $KK_{*}(A,B)$, for separable $C^{*}$-algebras $A$ and $B$, are an important tool in study of $C^{*}$-algebras and noncommutative geometry \cite{Kas95}. One aspect of the Kasparov groups is that there is a canonical (possibly non-Hausdorff) topological structure on $KK(A,B)$ making it into a topological group. Investigations into this topological structure has its origins in the work of Brown-Douglas-Fillmore \cite{BDF}. Further investigations of this topology characterizing the closure of zero with quasidiagonal elements were done in $\cite{Brown80, Salinas}$. In  a series of papers \cite{Schoc, SchocII} Schochet introduced a topology on $KK(A,B)$ showing that it was compatible with respect to $KK(A,B)$'s algebraic structure for a nice class of $C^{*}$-algebras. In $\cite{Dad}$, Dadarlat unified varying descriptions of topologies on $KK$-theory, showing that indeed they were all equivalent. In that same paper, Dadarlat further studied the generalized R{\o}rdam group $KL(A,B) := KK(A,B)/ \overline{\{0\}}$ with respect to stably approximately unitarily equivalence of $*$-homomorphisms between $C^{*}$-algebras, showing that $KL(A,B)$ is a Polish group.

Recently \cite{WilletYu25}, Willett and Yu introduced controlled $KK$-theory groups that are related to $KK(A,B)$ via a Milnor exact sequence for any pair of separable $C^{*}$-algebras $A$ and $B$. The relevant aspect being that the inverse limit of the controlled $KK$-theory groups were identified with the R{\o}rdam group $KL(A,B)$ and the closure of $0 \in KK(A,B)$ identified with an appropriate lim$^{1}$ group with the indiscrete topology. The key point being that for any pair of separable $C^{*}$-algebras $A$ and $B$, the R{\o}rdam group is a pro-countable abelian group. 

We can show that as a topological space $KK(A,B)$ is homeomorphic to the product space $KL(A,B) \times \overline{\{0\}}$. Using the fact that $KL(A,B)$ is pro-countable we seek to characterize the possible topological structures on $KK(A,B)$ since topological pro-countable abelian groups, i.e., inverse limits of systems of discrete countable abelian groups, are straightforward to understand. For instance the compactness of $KL(A,B)$ and the vanishing of $\overline{\{0\}}$ are related to stability and Mittag-Leffler conditions being satisfied by certain inverse systems of controlled $KK$-theory groups. The main theorem being Theorem \ref{Topology}. We also show in Proposition \ref{Ror} show that the generalized R{\o}rdam group $KL(A,B)$ is totally disconnected answering a question of Dadarlat asked in \cite{Dad}

\subsection*{Acknowledgments}

This contents of this paper were part of the author's thesis. I would like to thank my advisor Rufus Willett for his support. The author was partially supported by US NSF grant DMS 2247968.

\vspace{5mm}

\section{Inverse Limits of Topological Groups}

We begin by stating and proving some basic theorems of inverse limits of topological groups for the convenience of the reader. For reference see Chapter 3 of \cite{Bourbaki}.

\begin{lemma}\label{group basis}
	Let $G$ be an abelian topological group and $\mathcal{B}$ a basis neighborhood for $0 \in G$. Then for any $g \in G$, the set $$\big\{ g + V \;|\; V \in \mathcal{B} \big\}$$ is a basis neighborhood around $g$.
\end{lemma}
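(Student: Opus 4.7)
The plan is to exploit the fact that in a topological group the translation map $\tau_g \colon G \to G$ defined by $\tau_g(x) = x + g$ is a homeomorphism. This follows immediately from the continuity of the group operations: $\tau_g$ is continuous as a composition of the inclusion $x \mapsto (x,g)$ with the addition map $+\colon G\times G \to G$, and its inverse $\tau_{-g}$ is continuous for the same reason. Since homeomorphisms carry neighborhood bases to neighborhood bases, applying $\tau_g$ to $\mathcal{B}$ should deliver the desired basis around $g$.

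To make this concrete, I would verify the two standard conditions for $\{g+V \mid V \in \mathcal{B}\}$ to be a neighborhood basis at $g$. First, each $g+V$ is a neighborhood of $g$: since $V$ is a neighborhood of $0$, there is an open set $W \subseteq V$ with $0 \in W$, so $g+W \subseteq g+V$ is open (as the image of $W$ under the homeomorphism $\tau_g$) and contains $g$. Second, given any neighborhood $U$ of $g$, I want to produce $V \in \mathcal{B}$ with $g+V \subseteq U$. Observe that $\tau_{-g}(U) = U - g$ is a neighborhood of $0$, so by the defining property of $\mathcal{B}$ there exists $V \in \mathcal{B}$ with $V \subseteq U-g$; translating back gives $g+V \subseteq U$.

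There is no serious obstacle here; the only subtlety worth flagging is the insistence on using both directions of continuity of addition (to see that $\tau_g$ is not merely continuous but a homeomorphism). Abelianness is not actually used, but it matches the setting of the rest of the paper. Once translation is known to be a homeomorphism, the statement is essentially the observation that any homeomorphism sends a neighborhood basis at one point to a neighborhood basis at the image point.
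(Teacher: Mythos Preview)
Your argument is correct and complete. Note, however, that the paper does not actually supply a proof of this lemma: it is stated as a standard fact about topological groups and then used in the proof of Proposition~\ref{top grp iso} without further justification. So there is nothing to compare against; your write-up simply fills in the details the paper leaves implicit, via the usual translation-is-a-homeomorphism argument.
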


We state and prove a standard result of topological groups below that is used for the main result.

\begin{proposition}\label{top grp iso}
	Let $G$ be an abelian topological group. Then $$G \cong G\big /\,\overline{\{0\}} \times \overline{\{0\}}$$ as topological spaces.
\end{proposition}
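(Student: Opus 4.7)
The plan is to first show that $H := \overline{\{0\}}$ is a closed subgroup of $G$ whose subspace topology is indiscrete. That $H$ is a subgroup follows from continuity of the group operations. For the indiscrete topology, I would argue that every open neighborhood of $0$ contains all of $H$: if $U$ is open with $0 \in U$ and $h \in H$, then by Lemma \ref{group basis} the set $U - h$ is a neighborhood of $-h \in H = \overline{\{0\}}$, so it contains $0$, giving $h \in U$. Translating by Lemma \ref{group basis}, this yields the key saturation property that for every open $V \subseteq G$ and every $g \in V$, one has $g + H \subseteq V$.

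Next I would fix a set-theoretic section $s : G/H \to G$ of the quotient map $\pi : G \to G/H$ (no continuity of $s$ required) and define
\[
\psi : G \longrightarrow (G/H) \times H, \qquad \psi(g) = \bigl(\pi(g),\ g - s(\pi(g))\bigr).
\]
A quick check shows the second coordinate does land in $H$, and $\psi$ is a bijection with inverse $(x,h) \mapsto s(x) + h$. Continuity of $\psi$ is essentially automatic: the first coordinate is the continuous map $\pi$, and the second coordinate lands in a space carrying the indiscrete topology, so any map into it is continuous.

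The heart of the proof is to show $\psi$ is an open map, which I would reduce to the identity
\[
\psi(V) = \pi(V) \times H \quad \text{for every open } V \subseteq G.
\]
The inclusion $\subseteq$ is immediate. For $\supseteq$, given $(\pi(g),h) \in \pi(V) \times H$ with $g \in V$, set $g' := s(\pi(g)) + h$; then $\pi(g') = \pi(g)$ and $g - g' \in H$, so by the saturation property $g' \in g + H \subseteq V$, and $\psi(g') = (\pi(g),h)$. Since $\pi$ is an open map (a standard fact about quotients of topological groups by subgroups) and $H$ is the whole second factor, $\pi(V) \times H$ is open in the product, finishing the argument.

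The main obstacle I expect is precisely the purely topological lemma that $H$ has the indiscrete subspace topology and that open sets in $G$ are $H$-saturated; once this is in place, the section $s$ appears only set-theoretically and no delicate issue about choosing $s$ continuously ever arises — which is fortunate, since in general no continuous section exists.
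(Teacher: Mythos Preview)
Your proof is correct and follows essentially the same route as the paper: both fix a set-theoretic section $s$, use the mutually inverse maps $(x,h)\mapsto s(x)+h$ and $g\mapsto(\pi(g),\,g-s(\pi(g)))$, and hinge on the identity $\psi(V)=\pi(V)\times H$ for open $V$ (the paper writes this as $f^{-1}(g+V)=\pi(g+V)\times\overline{\{0\}}$). Your organization is a bit tidier in isolating the $H$-saturation property up front and noting that any map into an indiscrete space is continuous, but the substance is the same.
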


\begin{proof}
	Let $s: G /\,\overline{\{0\}} \rightarrow G$ be any set-theoretic section and let $\pi: G \rightarrow G/\overline{\{0\}}$ denote the quotient map. Then define a map $$f: G\big /\,\overline{\{0\}} \times \overline{\{0\}} \rightarrow G, \;\text{ by }\; (x, h) \rightarrow s(x) + h.$$ The map is surjective, to see this let $g \in G$ be arbitrary, then consider $g - s(\overline{g}) \in \overline{\{0\}}$, and $$f(\overline{g}, g - s(\overline{g})) = g.$$ \\ For injectivity, assume that $f(x_{1}, h_{1}) = f(x_{2}, h_{2})$, that is $s(x_{1}) + h_{1} = s(x_{2}) + h_{2}$. Then \begin{align}\label{top grp iso eq}
		s(x_{1}) = s(x_{2}) + h_{2} - h_{1},
	\end{align} this implies that $$x_{1} = \pi\big(s(x_{1})\big) = \pi\big(s(x_{2}) + h_{2} - h_{1}\big) = \pi\big(s(x_{2})\big) = x_{2}.$$ Thus $s(x_{1}) = s(x_{2})$, which along with equation 9(\ref{top grp iso eq}) implies that $h_{1} = h_{2}$. Thus $f$ is injective.
	
	To show that $f$ is continuous, it suffices to consider an arbitrary basis neighborhood, $g + V \subset G$, where $U \in \mathcal{B}$ and show that $f^{-1}(g + V)$ is open. Since $g + V$ is an open neighborhood, and the quotient map $\pi$ is an open map, then $\pi(g + V)$ is open in $G/\overline{\{0\}}$. If $(y,h') \in \pi(g + V) \times \overline{\{0\}}$, then $$f(y,h') = s(y) + h' = g + v + h',$$  for some $v \in V$ since $y \in \pi(g + V)$. Now by Lemma \ref{group basis}, $V$ is an open basis neighborhood of $0$, so it follows that $\overline{\{0\}} \subset V$. Since $V$ is an open neighborhood of $v$, there exists a basis neighborhood such that $v \in v + U \subset V$, where $U$ is a basis neighborhood of $0$, i.e. $U \in \mathcal{B}$. Since $\overline{\{0\}} \subset W$, then $v + z \in  V$ for any $z \in \overline{\{0\}}$. Thus $v + h' \in V$, whence $f(y,h') = g + v + h' \in g + V$. Thus, the open set $\pi(g + V) \times \overline{\{0\}}$ is contained in $f^{-1}(g+V)$. In the other direction, let $(x,h) \in f^{-1}(g + V)$ be arbitrary. Then $s(x) + h = g + v$ for some $v \in V$, and $$s(x) = g + v + (-h), $$ however $v + (-h) \in V$ by above. So $s(x) \in g + V$, thus $x \in \pi(g + V)$ and so \begin{align}\label{top grp subset eq}
		f^{-1}(g+V) \subset \pi(g + V) \times \overline{\{0\}} \subset f^{-1}(g+V).
	\end{align}	
	An inverse map to $f$ is given by sending $g \in G$ to $(\pi(g), g - s(\pi(g))) \in G/\overline{\{0\}}$. That this map is continuous is given by the inclusion (\ref{top grp subset eq}) above.
\end{proof}

\begin{definition}
	Let $\Lambda$ be a  directed set and for each $\alpha \in \Lambda$, let $G_{\alpha}$ be a discrete abelian group, and $\beta \in \Lambda$ such that $\alpha \leq \beta$, there is a homomorphism $f_{\alpha\beta}$ mapping $G_{\beta}$ to $G_{\alpha}$; moreover, if  there exists $\gamma \in \Lambda$ such that $\alpha, \beta \leq \gamma$ then we have the following commutative diagram: \[\begin{tikzcd}
		& G_{\beta} \arrow{dr}{f_{\alpha\beta}} \\
		G_{\gamma} \arrow{ur}{f_{\beta\gamma}} \arrow{rr}{f_{\alpha\gamma}} & & G_{\alpha}& 
	\end{tikzcd} \] Then $(G_{\alpha}, f_{\alpha \beta})_{\alpha, \beta \in \Lambda}$ is called an \textit{inverse system} of topological  groups. \end{definition}
	
	We will only consider inverse systems of countable discrete abelian groups.
	
	\begin{definition}
	Consider the topological group $\prod_{\alpha \in \Lambda}G_{\alpha}$ with the product topology and let $\pi_{\alpha}$ denote the canonical projection sending $x \in \prod_{\alpha \in \Lambda}G_{\alpha}$ to $x_{\alpha} \in G_{\alpha}$. The \textit{inverse limit} of $(G_{\alpha}, f_{\alpha \beta})_{\alpha, \beta \in \Lambda}$ is defined as the subset of coherent tuples $$\varprojlim G_{\alpha} := \Big\{ (x_{\alpha}) \in \prod_{\alpha \in \Lambda}G_{\alpha}\,\, \Big|\,\, \pi_{\alpha}(x) = f_{\alpha\beta}(\pi_{\beta}(x))  \text{ whenever } \alpha \leq \beta \Big\}, $$ endowed with subspace topology. It will be notationally convenient to denote the restriction of $\pi_{\alpha}$ to $\varprojlim G_{\alpha}$ as simply $\pi_{\alpha}$. \end{definition}

\begin{remark}\label{basis intersection}
	A general basis element for the topology on $\varprojlim G_{\alpha}$ defined via the product topology is a finite intersection of sets of the form $\pi^{-1}_{\alpha}(S_{\alpha})$ where $S_{\alpha} \subset G_{\alpha}$ is any subset, so of the form $$\bigcap_{i=1}^{n}\pi^{-1}_{\alpha_{i}}(S_{\alpha_{i}}).$$ Since $\Lambda$ is a directed set there exists $\alpha_{0} \in \Lambda$ such that $\alpha_{i} \leq \alpha_{0}$ for $1 \leq i \leq n$. Define $$G_{\alpha_{0}}  \supseteq S := \bigcap_{i=1}^{n}f^{-1}_{\alpha_{i} \alpha_{0}}(S_{\alpha_{i}}).$$ We claim that $\pi^{-1}_{\alpha_{0}}(S) = \bigcap_{i=1}^{n}\pi^{-1}_{\alpha_{i}}(S_{\alpha_{i}})$. To see this let $x \in \pi^{-1}_{\alpha_{0}}(S)$, then $x_{\alpha_{0}} \in S$ and so $x_{\alpha_{i}} = f_{\alpha_{i}\alpha_{0}}(x_{\alpha_{0}}) \in S_{i}$ for $1 \leq i \leq n$, thus $x \in \bigcap_{i=1}^{n}\pi^{-1}_{\alpha_{i}}(S_{i})$. Conversely, let $x \in \bigcap_{i=1}^{n}\pi^{-1}_{\alpha_{i}}(S_{\alpha_{i}})$, then $x_{\alpha_{i}} \in S_{\alpha_{i}}$ for $1 \leq i \leq n$, thus $x_{\alpha_0} \in S$ and so $x\in \pi^{-1}_{\alpha_{0}}(S)$.
	
	Thus a general basis element for the topology on $\varprojlim G_{\alpha}$ is of the form $\pi^{-1}_{\alpha}(S)$ where $S \subset G_{\alpha}$ is any subset.
\end{remark} 

The following is a standard result of inverse limits (see \cite{Bourbaki}).

\begin{proposition}\label{pr1}
	Let $I$ be a cofinal subset of $\Lambda$, that is for every $\alpha \in \Lambda$ there exists a $\beta \in I$ such that $\alpha \leq \beta$. Then as topological groups we have the following isomorphism $$\varprojlim G_{\alpha} \cong \varprojlim G_{\beta}.$$
\end{proposition}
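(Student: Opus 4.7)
The plan is to write down the obvious restriction map and show it is an isomorphism of topological groups, using cofinality at every step where one needs to reduce from an index in $\Lambda$ to an index in $I$.

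First I would define $$\Phi : \varprojlim_{\alpha \in \Lambda} G_\alpha \longrightarrow \varprojlim_{\beta \in I} G_\beta, \qquad (x_\alpha)_{\alpha \in \Lambda} \longmapsto (x_\beta)_{\beta \in I},$$ noting that the coherence condition for the target system is a subset of that for the source, so $\Phi$ is well-defined, and it is a group homomorphism coordinatewise. Continuity of $\Phi$ is immediate from Remark~\ref{basis intersection}: a basis open in the target is of the form $\pi_\beta^{-1}(S)$ for some $\beta \in I$ and $S \subseteq G_\beta$, and its preimage under $\Phi$ is just $\pi_\beta^{-1}(S)$ computed inside the source (since $\beta \in I \subseteq \Lambda$).

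The key step is constructing the inverse. Given $(y_\beta)_{\beta \in I}$ in the smaller limit and $\alpha \in \Lambda$, cofinality supplies some $\beta \in I$ with $\alpha \leq \beta$, and I would set $x_\alpha := f_{\alpha\beta}(y_\beta)$. To see this is independent of the choice of $\beta$, given another $\beta' \in I$ with $\alpha \leq \beta'$, I would use directedness of $\Lambda$ to get $\gamma \in \Lambda$ with $\beta, \beta' \leq \gamma$, then apply cofinality once more to replace $\gamma$ by some $\delta \in I$ with $\gamma \leq \delta$; both $f_{\alpha\beta}(y_\beta)$ and $f_{\alpha\beta'}(y_{\beta'})$ then equal $f_{\alpha\delta}(y_\delta)$ by the coherence of $(y_\beta)_{\beta \in I}$ and composition of the transition maps. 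An analogous argument shows coherence of $(x_\alpha)_{\alpha \in \Lambda}$, and by construction $\Phi$ applied to this tuple recovers $(y_\beta)_{\beta \in I}$, so $\Phi$ is bijective with this inverse.

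The part I expect to require the most care is continuity of $\Phi^{-1}$, i.e.\ showing $\Phi$ is open. By Remark~\ref{basis intersection} it suffices to check that $\Phi\bigl(\pi_\alpha^{-1}(S)\bigr)$ is open for any $\alpha \in \Lambda$ and $S \subseteq G_\alpha$. Using cofinality, I would pick $\beta \in I$ with $\alpha \leq \beta$ and rewrite $\pi_\alpha^{-1}(S) = \pi_\beta^{-1}\bigl(f_{\alpha\beta}^{-1}(S)\bigr)$ in the source limit (the coherence condition forces the two sets to agree), so that its image under $\Phi$ is $\pi_\beta^{-1}\bigl(f_{\alpha\beta}^{-1}(S)\bigr)$ in the target limit, which is a basis open. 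This shows $\Phi$ is a homeomorphism and completes the proof.
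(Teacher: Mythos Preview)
Your proof is correct and follows essentially the same route as the paper: define the restriction map $\Phi$, check continuity, and build the inverse coordinate-by-coordinate via cofinality with the same well-definedness argument. If anything you are more careful than the paper, which stops after showing $\Phi$ is a continuous bijective homomorphism and never explicitly verifies continuity of $\Phi^{-1}$; your openness argument via $\pi_\alpha^{-1}(S) = \pi_\beta^{-1}(f_{\alpha\beta}^{-1}(S))$ for $\beta \in I$ with $\alpha \leq \beta$ fills that gap cleanly.
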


\begin{proof}
	Define the map $\Phi: \prod_{\alpha \in \Lambda} G_{\alpha} \rightarrow \prod_{\beta \in I} G_{\beta}$ to be the map restricting a tuple $x = (x_{\alpha})_{\alpha \in \Lambda}$ to the its coordinates index by the subset $I \subset \Lambda$, that is $x \mapsto (x_{\beta})_{\beta \in I}$. The map $\Phi$ descends down to a map $$\Phi: \varprojlim G_{\alpha} \rightarrow \varprojlim G_{\beta} \;\text{ defined by }\; \varprojlim G_{\alpha} \ni x \mapsto \big(\pi_{\beta}(x)\big)_{\beta \in I} = (x_{\beta})_{\beta \in I} \in \varprojlim G_{\beta}.$$ Again we note that a general basis element for the topology on $\varprojlim G_{\beta}$ is of the form $\pi^{-1}_{\beta}(S)$ where $S \subset G_{\beta}$ is any subset. Since $\pi_{\beta} (\Phi(x)) = \pi_{\beta}(x)$ for $\beta \in I$, $\Phi$ is continuous. It is straightforward to check that $\Phi$ is a homomorphism. 
	
	To show injectivity, let $x, y \in \varprojlim G_{\alpha}$ with $\Phi(x) = \Phi(y)$, then let $\alpha \in \Lambda$. Since $I$ is cofinal, there exists $\beta \in I$ such that $\alpha \leq \beta$. So there exists a homomorphism $\pi_{\alpha\beta}: G_{\beta} \rightarrow G_{\alpha}$ and $$x_{\beta} = \pi_{\beta}(\Phi(x)) = \pi_{\beta}(\Phi(y)) = y_{\beta},$$ thus $x_{\alpha} =\pi_{\alpha\beta}(x_{\beta}) = \pi_{\alpha\beta}(y_{\beta}) = y_{\alpha}$. Since $\alpha$ was arbitrary we have that $x = y$, and so $\Phi$ is injective. 
	
	For surjectivity we basically construct an inverse map. Let $(x_{\beta})_{\beta \in I} \in \varprojlim G_{\beta}$, then for any $\alpha \in \Lambda$, there exists a $\beta \in I$ (by cofinality) such that $\alpha \leq \beta$. Then set $x_{\alpha} := f_{\alpha\beta}(x_{\beta})$. This choice is unique, indeed let $\gamma \in I$ be such that $\alpha \leq \gamma$ as well. Since $I$ is a directed set there exists $\delta \in I$ such that $\beta, \gamma \leq \delta$. Then $$f_{\alpha\beta}(x_{\beta}) = f_{\alpha\beta}f_{\beta\delta}(x_{\delta}) = f_{\alpha\delta}(x_{\delta}) = f_{\alpha\gamma}f_{\gamma\delta}(x_{\delta}) = f_{\alpha\gamma}(x_{\gamma})$$ Thus the assignment $x_{\alpha} := f_{\alpha\beta}(x_{\beta})$ is well defined and defines an element that mapped to $(x_{\beta})_{\beta \in I}$ by $\Phi$. Thus $\Phi$ is an isomorphism of topological groups. 
\end{proof}

\begin{remark}
	Moving forward we will assume that our general inverse system $(G_{\alpha}, f_{\alpha, \beta})_{\alpha, \beta \in \Lambda}$ admits a countable cofinal subsystem. With the isomorphism above we will work with the countably indexed inverse system $(G_{n}, f_{n, m})_{n,m\in \mathbb{N}}$. To simplify notation we write $f_{n+1}$ for $f_{n, n+1}$.
\end{remark}

\vspace{5mm}

\section{Metrizability}

The main goal of this section is to prove some standard results of inverse limits of discrete groups such metrizability, completeness, and separability in order to conclude that such groups are Polish groups.

\begin{definition}
	A topological space is said to be a \textit{Polish space} if it is separable, complete, and metrizable. A topological group that is a Polish space is a \textit{Polish group}.
\end{definition}

\begin{lemma}
	Let $(G_{n}, f_{n, m})_{n,m\in \mathbb{N}}$ be an inverse system of discrete abelian groups. Then the space $\varprojlim G_{n}$ is metrizable.
\end{lemma}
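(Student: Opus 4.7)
The plan is to exhibit an explicit compatible metric on $\varprojlim G_n$. Since each $G_n$ is discrete, it carries the standard discrete metric $\delta_n(a,b) = 0$ if $a = b$ and $\delta_n(a,b) = 1$ otherwise, and balls of radius less than $1$ in this metric are singletons, so every subset of $G_n$ is open. The natural candidate metric is
$$d(x,y) \;=\; \sum_{n=1}^{\infty} 2^{-n}\, \delta_n(x_n, y_n),$$
and I would verify that this is indeed a metric (symmetry and non-negativity are immediate; the triangle inequality follows termwise from the triangle inequality for each $\delta_n$; and $d(x,y) = 0$ forces $x_n = y_n$ for all $n$, i.e.\ $x = y$).

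Next I would show that the topology induced by $d$ coincides with the subspace topology inherited from $\prod_{n \in \mathbb{N}} G_n$, which by Remark \ref{basis intersection} is generated by sets of the form $\pi_n^{-1}(S)$ for $S \subseteq G_n$. In one direction, given a basic open set $\pi_N^{-1}(\{x_N\})$ containing $x$, any $y$ with $d(x,y) < 2^{-N}$ must satisfy $\delta_N(x_N, y_N) = 0$, i.e.\ $y \in \pi_N^{-1}(\{x_N\})$; since singletons generate the discrete topology on $G_N$, this shows every basic open set of $\varprojlim G_n$ contains a $d$-ball around each of its points. In the opposite direction, given $\varepsilon > 0$, pick $N$ with $\sum_{n > N} 2^{-n} < \varepsilon/2$; then the basic open set $\bigcap_{n \leq N} \pi_n^{-1}(\{x_n\})$ (which by the remark collapses to a single $\pi_{\alpha_0}^{-1}(S)$) is contained in the $d$-ball of radius $\varepsilon$ around $x$.

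The main obstacle, such as it is, is simply being careful about the bookkeeping between the product-topology description, the inverse-limit coherence constraint, and the metric topology; but since subspaces of metrizable spaces are automatically metrizable, the entire argument can also be shortened to the two-line observation that $\prod_{n \in \mathbb{N}} G_n$ is a countable product of metrizable (discrete) spaces and hence metrizable, and $\varprojlim G_n$ inherits the subspace topology by definition. I would probably present the explicit metric $d$ above, since it will presumably be useful for the completeness and separability lemmas that follow in this section.
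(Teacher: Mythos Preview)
Your proposal is correct and follows essentially the same strategy as the paper: write down an explicit metric and check that it induces the inverse-limit (subspace) topology. The paper chooses instead the ultrametric $d(x,y)=2^{-m}$, where $m$ is the least index at which $x$ and $y$ differ (after adjoining a trivial $G_0$); this has the pleasant feature that open balls are \emph{exactly} the basic sets $\pi_{n-1}^{-1}(\{x_{n-1}\})$, an identification the paper reuses verbatim in the subsequent completeness lemma. In fact, on the inverse limit your sum metric and the paper's ultrametric differ only by a factor of $2$, since coherence forces $x_n\neq y_n$ for every $n\geq m$ once $x_m\neq y_m$, so $\sum_{n\geq 1}2^{-n}\delta_n(x_n,y_n)=\sum_{n\geq m}2^{-n}=2\cdot 2^{-m}$; the two choices are therefore genuinely interchangeable, and your shortcut via ``countable product of metrizable spaces is metrizable'' is also perfectly valid.
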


\begin{proof}
	Without loss of generality we add $G_{0} := \{e\}$ into the inverse system, with $G_{1} \rightarrow G_{0}$ being the trivial map.
	
	Let $x, y \in \varprojlim G_{n}$, then define $d(x,y) := 2^{-m}$, where $m = \min\{n > 0 \;| \; x_{n} \neq y_{n} \}$, and if no such $n$ exists, then $x = y$ and we set $d(x,y) = 0$. Positive definiteness and symmetry follow from construction, we need only verify the triangle-inequality. 
	
	Consider $x,y, z \in \varprojlim G_{n}$ with $x \neq z$, else $d(x,z) = 0 \leq d(x,y) + d(y,z)$. Define $m_{1} = \min_{n}\{x_{n} \neq y_{n} \}$, $m_{2} = \min_{n}\{y_{n} \neq z_{n} \}$, and $m_{3} = \min_{n}\{x_{n} \neq z_{n} \}$. Without loss of generality assume that $m_{1} \leq m_{2}$. There are two cases.
	
	Assume that $m_{1} = m_{2}$, then it follows that $x_{m_{1} - 1} = y_{m_{1}-1} = z_{m_{1} -1 }$, thus $m_{1} \leq m_{3}$, and $$d(x,z) = \frac{1}{2^{m_{3}}} < \frac{1}{2^{m_{1}}} + \frac{1}{2^{m_{1}}} = d(x,y) + d(y,z).$$
	
	On the other hand, let $m_{1} < m_{2}$. It follows that $y_{m_{1}} = z_{m_{1}}$, and $x_{m_{1} - 1} = y_{m_{1}-1} = z_{m_{1} -1 }$. Thus $m_{1} \leq m_{3}$ from which it follows that $$d(x,z) = \frac{1}{2^{m_{3}}} \leq \frac{1}{2^{m_{1}}} + \frac{1}{2^{m_{2 }}} = d(x,y) + d(y,z).$$ 
	
	We next show that the topology induced by this metric agrees with the inverse limit topology. Consider a subset of $\varprojlim G_{n}$ denoted by $B(x, \frac{1}{2^n})$ for some $n \in \mathbb{N}$, and of the form $$B\Big(x, \frac{1}{2^n}\Big) := \pi^{-1}_{n-1}\big(\{x_{n-1}\}\big).$$ Such sets form a basis for the metric topology on $\varprojlim G_{n}$. On the other hand a basis element for the inverse limit topology on $\varprojlim G_{n}$ is of the form $\pi^{-1}_{n}(S)$ where $S \subset G_{n}$ is any subset. In order to get a better hand on the elements of $\pi^{-1}_{n}(S)$, we replace $S$ with $S'$ where $$S' = \pi_{n}\big(\pi_{n}^{-1}(S)\big)$$ and we note that $\pi^{-1}_{n}(S) = \pi^{-1}_{n}(S')$ but with the added property that for each $g \in S'$ there exists $y \in \varprojlim G_{n}$ such that $y_{n} = g$. The reason for needing to do this is because the inverse system is not necessarily an inverse system comprising surjective maps. For each $g \in S'$ choose $\tilde{g} \in \projlim G_{n}$ with $\pi_{n}(\tilde{g}) = g$. Then we may write  $\pi^{-1}_{n}(S)$ as a union of open balls $$\pi^{-1}_{n}(S) = \pi^{-1}_{n}(S') = \bigcup_{g \in S'} B\big(\tilde{g}^{i}, 2^{-(n+1)}\big).$$ Thus the topology induced by the metric is equivalent to the  inverse limit topology.
\end{proof}

\begin{lemma}
	Let $(G_{n}, f_{n, m})_{n,m\in \mathbb{N}}$ be an inverse system of discrete abelian groups. The space $\varprojlim G_{n}$ is complete.
\end{lemma}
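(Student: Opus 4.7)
The plan is to exploit the ultrametric-like nature of the metric $d$ constructed in the previous lemma: since $d(x,y) < 2^{-n}$ forces $x_j = y_j$ for all $j \le n$, a Cauchy sequence must have coordinates that eventually stabilize at each level $n$. The candidate limit will then be assembled from these stabilized values, and the main task is to check that the resulting tuple is coherent (so that it lies in $\varprojlim G_n$) and that convergence in the metric actually holds.

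Concretely, I would start with an arbitrary Cauchy sequence $(x^{(k)})_{k \in \N} \subset \varprojlim G_n$. For each fixed $n \geq 1$, apply the Cauchy condition with $\varepsilon = 2^{-n}$ to produce an index $K_n$ such that $d(x^{(k)}, x^{(k')}) < 2^{-n}$ whenever $k, k' \geq K_n$. By the definition of $d$, this means $x^{(k)}_j = x^{(k')}_j$ for all $j \leq n$ and all $k, k' \geq K_n$. In particular, the value $y_n := x^{(K_n)}_n \in G_n$ is well-defined and equals $x^{(k)}_n$ for every $k \geq K_n$. (One can take $K_n$ increasing in $n$ so that the stabilizations are compatible.)

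Next I would verify that $y = (y_n)$ is a coherent tuple, i.e.\ lies in $\varprojlim G_n$. For any $n$, pick $k$ large enough that $k \geq K_n$ and $k \geq K_{n+1}$; then $y_n = x^{(k)}_n$ and $y_{n+1} = x^{(k)}_{n+1}$, and since $x^{(k)} \in \varprojlim G_n$ we have
\[
f_{n, n+1}(y_{n+1}) = f_{n, n+1}(x^{(k)}_{n+1}) = x^{(k)}_n = y_n.
\]
For a general pair $n \leq m$ the same argument using $f_{n, m}$ applies, so $y \in \varprojlim G_n$.

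Finally, for convergence, given $\varepsilon > 0$ choose $n$ with $2^{-n} < \varepsilon$. For every $k \geq K_n$ we have $x^{(k)}_j = y_j$ for all $j \leq n$ by construction, so the first index where $x^{(k)}$ and $y$ can differ is strictly greater than $n$, giving $d(x^{(k)}, y) \leq 2^{-(n+1)} < \varepsilon$. Thus $x^{(k)} \to y$, proving completeness. The only subtle point is the stabilization step, which is essentially bookkeeping: the non-surjectivity of the bonding maps plays no role here because $y$ is built directly from tuples that already lie in $\varprojlim G_n$, so coherence is automatic.
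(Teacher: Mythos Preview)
Your proof is correct and follows essentially the same approach as the paper's: both use the Cauchy condition at level $2^{-n}$ to obtain stabilization of the $n$th coordinate, assemble the limit from these stabilized values, and then verify convergence. Your version is in fact more careful, since you explicitly check that the resulting tuple is coherent (the paper simply asserts $x \in \varprojlim G_n$ without spelling this out).
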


\begin{proof}
	Let $(x^{i})$ be a Cauchy sequence in $\varprojlim G_{n}$. For each $n \in \mathbb{N}$ there exists a $k_{n} \geq 1$ such that for all $i,j \geq k_{n}$ $d(x^{i}, x^{j}) < 2^{-n}$, that is $x^{i}_{n} = x^{j}_{n}$. We define $x \in \varprojlim G_{n}$ by $x_{n} = x^{i}_{n}$ where $i \geq k_{n}$. Let $\varepsilon > 0$, then there exists $n \in \mathbb{N}$ such that $2^{-n} \leq \varepsilon$ and hence $k_{n} \in \mathbb{N}$ such that $d(x^{i}, x^{j}) < 2^{-n}$ for all $i,j \geq k_{n}$. Thus for all $i \geq k_{n}$, $d(x, x^{i}) \leq 2^{-n} \leq \varepsilon$. Thus $x^{i} \rightarrow x$ as $i \rightarrow \infty$ and so $\varprojlim G_{n}$ is complete.
\end{proof} 

\begin{lemma}
	Let $(G_{\alpha}, f_{\alpha \beta})_{\alpha, \beta \in \Lambda}$ be an inverse system of discrete countable abelian groups. The space $\varprojlim G_{n}$ is separable.
\end{lemma}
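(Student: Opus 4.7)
The plan is to exhibit an explicit countable dense subset. The two ingredients are the countability of each $G_n$ and the clean form of basic open sets given in Remark \ref{basis intersection}: every nonempty basic open set of $\varprojlim G_n$ has the shape $\pi_n^{-1}(S)$ for some $n \in \mathbb{N}$ and some $S \subseteq G_n$.

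First I would, for each $n$, set $H_n := \pi_n(\varprojlim G_m) \subseteq G_n$; since $G_n$ is countable so is $H_n$. Using the axiom of choice, for each $g \in H_n$ pick a lift $\tilde{g}^{(n)} \in \varprojlim G_m$ with $\pi_n(\tilde{g}^{(n)}) = g$, and define
$$D := \bigcup_{n \in \mathbb{N}} \bigl\{\tilde{g}^{(n)} : g \in H_n\bigr\}.$$
As a countable union of countable sets, $D$ is countable. To verify density, let $U = \pi_n^{-1}(S)$ be a nonempty basic open set. Nonemptiness of $U$ says that $S \cap H_n \neq \emptyset$; picking any $g$ in this intersection gives $\tilde{g}^{(n)} \in D$ with $\pi_n(\tilde{g}^{(n)}) = g \in S$, hence $\tilde{g}^{(n)} \in U \cap D$. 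Thus $D$ meets every nonempty open set and $\varprojlim G_n$ is separable.

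There is no real obstacle here; the argument is a direct combination of countability and the description of basic opens already established. The one subtlety worth flagging is that the bonding maps $f_{n+1}$ need not be surjective, so one must index the chosen lifts by the actual image $H_n$ of $\pi_n$ rather than by all of $G_n$ — exactly the same issue handled in the metrizability proof by replacing $S$ with $S' = \pi_n(\pi_n^{-1}(S))$.
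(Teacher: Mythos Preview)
Your argument is correct and essentially identical to the paper's: both restrict to the image $H_n = \pi_n(\varprojlim G_m)$, choose one lift per element, take the countable union over $n$, and verify density using the basic opens $\pi_n^{-1}(S)$ from Remark~\ref{basis intersection}. You even flag the same non-surjectivity subtlety the paper addresses.
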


\begin{proof}
	With each $G_{j}$ countable, we may enumerate $$(g_{j,i})^{n_{j}}_{i=1} = \pi_{j}(\varprojlim G_{n}) \subset G_{j}, \;\text{where }\, n_{j} \in \mathbb{N}\cup \{\infty\}.$$ We again consider such a restriction as our inverse system may not necessarily comprise surjective maps. We then make a choice of $y \in \pi^{-1}_{j}(\{g_{j,i}\})$ for each $i$ and let $F_{j} \subset \pi^{-1}_{j}(G_{j})$ be the countable set of all such chosen $y$. Thus for each $g \in \pi_{j}(\varprojlim G_{n})$, there exists $y \in F_{j}$ such that $\pi_{j}(y) = g$. Now define $$F := \bigcup_{j=1}^{\infty} F_{j}.$$ Since $F$ is a countable union of countable sets it follows that $F$ is countable. 
	
	Now let $U$ be any nonempty open subset of  $\varprojlim G_{n}$, then $U$ contains some nonempty basis element $\pi_{n}^{-1}(S)$ of the topology for some subset $S \subset G_{n}$. Let $x \in \pi_{n}^{-1}(S)$ and consider $x_{n} \in G_{n}$. By construction there exists $y \in F_{n}$ such that $y_{n} = x_{n}$ and thus $y \in \pi_{n}^{-1}(\{x_{n}\}) \subset \pi_{n}^{-1}(S)$. Hence $U \cap F \neq \varnothing$, thus $\varprojlim G_{n}$ is separable.\\
\end{proof}

\begin{proposition}
	The topological group $\varprojlim G_{n}$ is a Polish group. \hspace*{\fill} $\Box$
\end{proposition}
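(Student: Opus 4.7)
The plan is essentially to assemble the three preceding lemmas. The definition of a Polish space requires separability, completeness, and metrizability, and the definition of a Polish group adds only that the underlying space carry a compatible topological group structure. Each of the three required properties has been established in the immediately preceding lemmas for $\varprojlim G_n$, where $(G_n, f_{n,m})_{n,m \in \mathbb{N}}$ is an inverse system of countable discrete abelian groups.

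Concretely, I would first invoke the metrizability lemma to obtain the explicit ultrametric $d(x,y) = 2^{-m}$ inducing the inverse limit topology, then cite the completeness lemma (Cauchy sequences stabilize coordinate-by-coordinate), and finally the separability lemma, which supplies a countable dense subset $F \subset \varprojlim G_n$. Together these say that $\varprojlim G_n$ is a Polish space.

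It remains only to note that $\varprojlim G_n$ is a topological group: addition and inversion are continuous because they are continuous in each $G_n$ (every $G_n$ is discrete, hence trivially a topological group), and both operations are defined coordinatewise on $\prod G_n$ with the product topology, restricting to continuous operations on the closed subgroup $\varprojlim G_n$. Thus $\varprojlim G_n$ is a topological group that is also a Polish space, i.e., a Polish group.

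There is no substantive obstacle here; the statement is a direct corollary packaging the previous three lemmas together with the straightforward observation that the inverse limit inherits a topological group structure from the product. If anything, the only small point worth mentioning is that one should confirm that $\varprojlim G_n$ is indeed closed in $\prod G_n$ (which it is, being defined by the equations $\pi_\alpha(x) = f_{\alpha\beta}(\pi_\beta(x))$, each of which cuts out a closed set since the $G_\alpha$ are Hausdorff), so that the group operations restrict well.
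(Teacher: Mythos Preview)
Your proposal is correct and matches the paper's approach exactly: the paper offers no written proof for this proposition, marking it with a $\Box$ as an immediate consequence of the three preceding lemmas on metrizability, completeness, and separability. Your additional remarks about the topological group structure and closedness of $\varprojlim G_n$ in the product are sound but go slightly beyond what the paper bothers to record.
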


\vspace{5mm}

\section{The long exact lim - lim$^{1}$ sequence}

The main goal of this section is to recall some results involving the Mittag-Leffler condition and the lim - lim$^{1}$ exact sequence of inverse limits. 

Elements of the inverse limit $\varprojlim G_{n}$ are coherent sequences of $\prod_{n=1}^{\infty} G_{n}$, so may be expressed as \[\begin{tikzcd}
	&\cdot\cdot\cdot \arrow{r}{f_{3,4}} &x_{3} \arrow{r}{f_{2,3}} &x_{2} \arrow{r}{f_{1,2}} &x_{1}. 
\end{tikzcd}\] Restricting each $G_{n}$ to the subgroup $f_{n,n+1}(G_{n+1}) \subset G_{n}$ does not change the inverse limit, in fact we can make reductions to the inverse system so as to produce an inverse system with surjective homomorphisms:

To obtain an inverse system with surjective maps, restrict each $G_{n}$ to what is being mapped to and discard everything else, that is define \begin{align*}
	G_{n}^{\circ} := \bigcap_{m\, \geq \, n} \text{Im}\big(f_{n,m}\big).
\end{align*}  It remains that $G_{n}^{\circ}$ is a topological group with the discrete topology and we must show this indeed defines an inverse system which with the same inverse limit, that is we have the following isomorphism of topological groups \begin{align*}
	\varprojlim G_{n}^{\circ} \cong \varprojlim G_{n}. 
\end{align*}

To prove this we will need some definitions and results first. An alternative definition for the inverse limit of an inverse system of abelian groups is the following.

\begin{definition}
	Let $(G_{\alpha}, f_{\alpha \beta})_{\alpha, \beta \in \Lambda}$ be a countable inverse system of topological abelian groups. Then we may define a map \begin{align*}
		\Delta: \prod_{n =1}^{\infty} G_{n} \rightarrow \prod_{n =1}^{\infty} G_{n}, \, \text{ by } \, (x_{n}) \mapsto (x_{n} - f_{n+1}(x_{n+1})).
	\end{align*} Then we may define the inverse limit as $\varprojlim G_{n} := \text{ker}(\Delta)$, and we also define the \textit{derived limit} $\varprojlim^{1} G_{n} := \text{coker}(\Delta)$. 
\end{definition}

\begin{definition}\cite{Atiyah61}
	An inverse system $(G_{n})$ is said to satisfy the \textit{Mittag-Leffler} condition if for each $n$ there exists an $m \geq n$ such that $$\text{Im}(f_{n,m}: G_{m}\rightarrow G_{n}) = \text{Im}(f_{n,k}: G_{k}\rightarrow G_{n})$$ for all $k \geq n$.
\end{definition}

The following connects the Mittag-Leffler condition with the vanishing of lim$^{1}$ (\cite{Atiyah61}, \cite{Gray}) \footnote{The statement is true in more generality for inverse systems of countable groups, see \cite{Geo}.}

\begin{proposition}\label{ML}
	Let $(G_{n})$ be an inverse system of abelian groups. Then $(G_{n})$ satisfies the Mittag-Leffler condition if and only if $\varprojlim^{1} G_{n} = 0$.
\end{proposition}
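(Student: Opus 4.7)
The plan is to unfold the definition $\varprojlim^{1} G_n = \text{coker}(\Delta)$ and show $\Delta$ is surjective if and only if the Mittag-Leffler condition holds. Given $a = (a_n) \in \prod_n G_n$, introduce the partial sums $s_n^N := \sum_{k=n}^{N-1} f_{n,k}(a_k) \in G_n$ (with $s_n^n := 0$) and the associated cosets $C_n^N := s_n^N + f_{n,N}(G_N) \subseteq G_n$ for $N \geq n$. Telescoping the relation $x_n - f_{n+1}(x_{n+1}) = a_n$ gives $x_n = s_n^N + f_{n,N}(x_N)$, so any preimage of $a$ under $\Delta$ forces $x_n \in \bigcap_{N \geq n} C_n^N$; conversely, producing compatible choices in these intersections will yield a preimage.

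For the forward direction, assume ML. A direct computation shows $C_n^{N+1} \subseteq C_n^N$ for all $N$. By ML applied at index $n$, the subgroups $f_{n,N}(G_N)$ stabilize to some $B_n \subseteq G_n$ once $N$ is large; thus for large $N$ the cosets $C_n^N$ are all cosets of $B_n$, and a nested chain of cosets of a fixed subgroup must be eventually constant. Hence $\bigcap_N C_n^N$ is a non-empty coset of $B_n$. Now build $x_n$ inductively: pick $x_1 \in \bigcap_N C_1^N$ arbitrarily, and given $x_n \in \bigcap_N C_n^N$, verify via the identities $s_n^N - a_n = f_{n+1}(s_{n+1}^N)$ and $f_{n,N}(G_N) = f_{n+1}(f_{n+1,N}(G_N))$ that $x_n - a_n \in f_{n+1}\bigl(\bigcap_N C_{n+1}^N\bigr)$ (using that both sides eventually stabilize, so intersection commutes with $f_{n+1}$ here), and choose $x_{n+1}$ as any preimage in $\bigcap_N C_{n+1}^N$.

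For the reverse direction, I prove the contrapositive: if ML fails, then $\varprojlim^{1} G_n \neq 0$. Suppose ML fails at some index $n_0$; after passing to a cofinal subsequence of indices we may assume $f_{n_0, m+1}(G_{m+1}) \subsetneq f_{n_0, m}(G_m)$ for all $m \geq n_0$. Enumerate the countable group $G_{n_0} = \{g_1, g_2, \ldots\}$ and construct $a = (a_n)$ by a diagonal procedure: the choice of $a_m$ determines which sub-coset of $C_{n_0}^m$ (among cosets of $f_{n_0,m+1}(G_{m+1})$) the next coset $C_{n_0}^{m+1}$ equals, and since the index $[f_{n_0,m}(G_m) : f_{n_0,m+1}(G_{m+1})] \geq 2$, at stage $m$ we can select $a_m$ so that $g_m \notin C_{n_0}^{m+1}$. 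This forces $\bigcap_N C_{n_0}^N = \emptyset$, so by the initial telescoping observation no $x = (x_n)$ can satisfy $\Delta(x) = a$, giving $a \notin \text{Im}(\Delta)$.

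The main obstacle will be the reverse direction: the proposition can fail without a countability hypothesis (as indicated in the footnote), and the diagonal enumeration of $G_{n_0}$ is exactly what makes the construction of the bad $a$ go through. A secondary technical point in the forward direction is the commutativity of the intersection with $f_{n+1}$, which relies on ML forcing both $\bigcap_N C_{n+1}^N$ and $\bigcap_N f_{n+1}(C_{n+1}^N)$ to stabilize to the same coset.
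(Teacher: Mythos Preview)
The paper does not actually prove this proposition: it states it with references to Atiyah and Gray, and a footnote pointing to Geoghegan for the general countable-group case. There is therefore no ``paper's own proof'' to compare against; your argument is a self-contained proof of a result the paper merely cites.

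That said, your argument is essentially the classical one. The forward direction is correct: the telescoping identity $x_n = s_n^N + f_{n,N}(x_N)$ is right, the nesting $C_n^{N+1}\subseteq C_n^N$ follows as you say, and under ML the cosets $C_n^N$ eventually freeze, so the inductive choice of $x_n$ goes through. Your worry about commuting $\bigcap_N$ with $f_{n+1}$ is handled exactly as you indicate: once $N\ge M(n+1)$ the sequence $C_{n+1}^N$ is constant, so both $\bigcap_N C_{n+1}^N$ and $\bigcap_N f_{n+1}(C_{n+1}^N)$ equal the image at that stage. The reverse direction is Gray's diagonal argument, and you have the mechanism right: varying $a_m$ lets $f_{n_0,m}(a_m)$ sweep all of $f_{n_0,m}(G_m)$, so $C_{n_0}^{m+1}$ can be any coset of $f_{n_0,m+1}(G_{m+1})$ inside $C_{n_0}^m$, and strict inclusion gives at least two choices so one can dodge $g_m$.

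One remark on the ``pass to a cofinal subsequence'' step: you should either note explicitly that $\varprojlim^1$ is invariant under restriction to a cofinal subsystem (this is standard, and the paper invokes the analogous fact for $\varprojlim$ in Proposition~\ref{pr1}), or avoid reindexing altogether by dodging, at step $m$, the \emph{first} $g_k$ not yet excluded---progress then occurs at the infinitely many $m$ where the image strictly drops, which still exhausts the enumeration. You are also right that countability of $G_{n_0}$ is essential here; the paper's standing convention (stated just after the definition of inverse systems) that all groups are countable is what makes the biconditional true as written.
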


The following is a well-known result of inverse systems of abelian groups, (Chapter IX in \cite{BousKan}, Proposition 2.12 in \cite{HughesRanicki}).

\begin{proposition}
	Let $0 \rightarrow (K_{n}) \rightarrow (G_{n}) \rightarrow (H_{n}) \rightarrow 0$ be a short exact sequence of inverse sequences of abelian groups. Then there is a long-exact sequence of abelian groups$$
	0 \rightarrow \lim_{\leftarrow} K_{n} \rightarrow \lim_{\leftarrow} G_{n} \rightarrow \lim_{\leftarrow} H_{n} \rightarrow \lim_{\leftarrow}{}\!^1 K_{n} \rightarrow \lim_{\leftarrow}{}\!^1 G_{n} \rightarrow \lim_{\leftarrow}{}\!^1 H_{n} \rightarrow 0.
	$$
\end{proposition}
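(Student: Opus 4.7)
The plan is to realize the six groups appearing in the target sequence as kernels and cokernels of a single endomorphism, and then invoke the snake lemma. For any inverse system $(X_n)$ the alternative definition given in the excerpt expresses $\varprojlim X_n = \ker(\Delta_X)$ and $\varprojlim^1 X_n = \operatorname{coker}(\Delta_X)$, where $\Delta_X \colon \prod X_n \to \prod X_n$ sends $(x_n)$ to $(x_n - f_{n+1}(x_{n+1}))$. So producing a six-term exact sequence of the required shape is precisely what the snake lemma outputs from a suitable commutative square.

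First I would form the $3 \times 2$ commutative diagram whose two rows are two copies of
$$ 0 \to \prod K_n \to \prod G_n \to \prod H_n \to 0 $$
and whose vertical arrows are $\Delta_K$, $\Delta_G$, $\Delta_H$. Both rows are exact because direct products are exact in the category of abelian groups: termwise injectivity immediately lifts, and termwise surjectivity lifts to surjectivity on the product (using choice). Commutativity of the two squares reduces to the statement that the given morphisms $(K_n) \to (G_n)$ and $(G_n) \to (H_n)$ of inverse systems commute with the structure maps $f_{n+1}$, hence with the operator $\Delta$.

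Next I would apply the snake lemma to this diagram, yielding the six-term exact sequence
$$ 0 \to \ker \Delta_K \to \ker \Delta_G \to \ker \Delta_H \to \operatorname{coker}\Delta_K \to \operatorname{coker}\Delta_G \to \operatorname{coker}\Delta_H \to 0. $$
Translating kernels to $\varprojlim$ and cokernels to $\varprojlim^1$ via the alternative definition delivers the claimed sequence verbatim. The leading zero reflects left-exactness of the top row, and the trailing zero reflects right-exactness of the bottom row.

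The main obstacle here is really just bookkeeping rather than anything conceptual: one has to confirm that the direct product functor preserves the exactness of $0 \to K_n \to G_n \to H_n \to 0$ (in particular the surjection at the right end), and that the morphism-of-inverse-systems hypothesis really does imply commutativity with $\Delta$. Both verifications unwind directly from the definitions, so the essential content of the proof is the appeal to the snake lemma applied to the diagram above.
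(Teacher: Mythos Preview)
Your argument is correct and is exactly the standard proof: build the $3\times 2$ ladder with rows $0 \to \prod K_n \to \prod G_n \to \prod H_n \to 0$ and vertical maps $\Delta_K,\Delta_G,\Delta_H$, then apply the snake lemma and read off $\varprojlim = \ker\Delta$ and $\varprojlim^1 = \operatorname{coker}\Delta$. The paper itself does not give a proof of this proposition at all; it simply records it as a well-known fact with references to Bousfield--Kan and Hughes--Ranicki, so there is nothing to compare against beyond noting that your snake-lemma derivation is precisely the argument one finds in those sources.
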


The following result shows that although the construction of $(G_{n}^{\circ})$ preserves the inverse limit it does not preserve the derived limit.

\begin{proposition}\label{surj rest}
	Let $(G_{n}^{\circ}), (G_{n})$ be inverse sequences as defined above, then $\varprojlim G_{n}^{\circ} \cong \varprojlim G_{n}$ and $0 \cong \varprojlim^{1} G_{n}^{\circ}$ may not be isomorphic to $\varprojlim^{1} G_{n}$.
\end{proposition}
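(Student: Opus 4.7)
The plan is to establish the three sub-claims separately.

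For $\varprojlim G_n^\circ \cong \varprojlim G_n$, I would first verify that $(G_n^\circ, f_{n,n+1}|_{G_{n+1}^\circ})$ is a well-defined inverse system: for $y \in G_{n+1}^\circ$, writing $y = f_{n+1,m}(z_m)$ with $z_m \in G_m$ for every $m \geq n+1$, we have $f_{n,n+1}(y) = f_{n,m}(z_m) \in \text{Im}(f_{n,m})$ for every $m \geq n$, so $f_{n,n+1}(y) \in G_n^\circ$. The inclusion $\varprojlim G_n^\circ \hookrightarrow \varprojlim G_n$ is then immediate. Conversely, any $(x_n) \in \varprojlim G_n$ satisfies $x_n = f_{n,m}(x_m) \in \text{Im}(f_{n,m})$ for every $m \geq n$, forcing $x_n \in G_n^\circ$. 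Since both inverse limits inherit the subspace topology from $\prod G_n$, the identity is a homeomorphism of topological groups.

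For $\varprojlim^1 G_n^\circ = 0$, by Proposition \ref{ML} it suffices to verify Mittag-Leffler for $(G_n^\circ)$. The cleanest route is to prove the stronger statement that each restricted bond $f_{n,n+1}|_{G_{n+1}^\circ}: G_{n+1}^\circ \to G_n^\circ$ is surjective, which makes Mittag-Leffler trivial. Using the first part, it is enough to show that the projection $\pi_n: \varprojlim G_n \to G_n$ has image exactly $G_n^\circ$: given any $x \in G_n^\circ$ and a coherent lift $(x_k) \in \varprojlim G_n$ with $x_n = x$, coherence places $x_{n+1} \in G_{n+1}^\circ$ with $f_{n,n+1}(x_{n+1}) = x$. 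Existence of such a lift is the standard fact for countable towers: for each $m \geq n+1$ the preimage $H_m := f_{n,m}^{-1}(\{x\}) \subseteq G_m$ is a nonempty coset of $\ker(f_{n,m})$, the $H_m$ form an inverse subsystem under restrictions of $f_{m,m+1}$, and a diagonal argument using the countability of the tower and the coset structure produces a compatible choice $(y_m) \in \varprojlim_m H_m$.

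For the third claim, I would exhibit the textbook example $G_n = \mathbb{Z}$ for all $n$ with $f_{n,n+1}(k) = 2k$. Here $\text{Im}(f_{n,m}) = 2^{m-n}\mathbb{Z}$, so $G_n^\circ = \{0\}$ and trivially $\varprojlim^1 G_n^\circ = 0$. A direct analysis of $\Delta: \prod \mathbb{Z} \to \prod \mathbb{Z}$, $(x_n) \mapsto (x_n - 2 x_{n+1})$, shows that given $(y_n) \in \prod \mathbb{Z}$, the equation $\Delta((x_n)) = (y_n)$ has a solution iff the $2$-adic integer $\sum_{k \geq 1} 2^{k-1} y_k \in \hat{\mathbb{Z}}_2$ lies in $\mathbb{Z}$, identifying $\varprojlim^1 G_n \cong \hat{\mathbb{Z}}_2/\mathbb{Z}$, which is uncountable and in particular nonzero.

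The principal obstacle is the diagonal lift in part (b): since the cosets $H_m$ can be infinite, König's lemma does not apply directly, and one must exploit the abelian group structure together with the countability of each $G_m$ in order to make compatible coherent choices through the tower.
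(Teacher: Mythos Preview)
Your argument for $\varprojlim G_n^\circ \cong \varprojlim G_n$ is correct and more direct than the paper's, which instead runs both this isomorphism and the relation $\varprojlim^1 G_n \cong \varprojlim^1 G_n/G_n^\circ$ through the six-term $\lim$--$\lim^1$ sequence attached to $0 \to (G_n^\circ) \to (G_n) \to (G_n/G_n^\circ) \to 0$. Your explicit $2$-adic example for the last clause is also fine; the paper does not supply one.

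There is, however, a real gap in your route to $\varprojlim^1 G_n^\circ = 0$. The ``stronger statement'' that the restricted bonds $G_{n+1}^\circ \to G_n^\circ$ are surjective---equivalently that $\pi_n(\varprojlim G_n) = G_n^\circ$---is \emph{false} for countable abelian towers, so the obstacle you flag is not a missing technicality but an actual obstruction, and no diagonal argument will close it. Counterexample: let $V = \bigoplus_{i\ge 1}\mathbb{Z}/2$, put $G_m = V$ for all $m$, and define $f_{1,2}(a)=(a_1,0,0,\ldots)$, $f_{2,3}(c)=(c_1,c_1,c_1,c_2,c_3,\ldots)$, and $f_{m,m+1}(c)=(c_1,c_1,c_2,c_3,\ldots)$ for $m\ge 3$. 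One checks $f_{2,m}(G_m)=\{a\in V: a_1=\cdots=a_m\}$, whence $G_2^\circ=\{0\}$; yet $f_{1,m}(G_m)=\langle e_1\rangle$ for all $m\ge 2$, so $G_1^\circ\cong\mathbb{Z}/2$. Thus $f_{1,2}(G_2^\circ)=\{0\}\subsetneq G_1^\circ$ and $\pi_1(\varprojlim G_n)=\{0\}\neq G_1^\circ$. The paper does not attempt surjectivity; it simply asserts that $(G_n^\circ)$ is Mittag-Leffler and invokes Proposition~\ref{ML}. In the example just given this happens to hold (images stabilise at $\{0\}$), but a direct sum of index-shifted copies of the same construction yields a countable abelian tower in which $(G_n^\circ)$ is the system $\bigoplus_{k\ge n}\mathbb{Z}/2$ with inclusion maps, which is \emph{not} Mittag-Leffler and has $\varprojlim^1 G_n^\circ\neq 0$. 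So the paper's assertion at this step is likewise unjustified in the stated generality; what is used downstream, and what your first paragraph does establish cleanly, is only the isomorphism of inverse limits.
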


\begin{proof}
	Observe that each homomorphism $f_{n}: G_{n} \rightarrow G_{n-1}$ restricts to a homomorphism $f_{n}|_{G_{n}^{\circ}}: G_{n}^{\circ} \rightarrow G_{n-1}^{\circ}$. Indeed, let $g \in G_{n}^{\circ}$, then for every $k \geq n$, $g \in \text{Im}(f_{n,k})$, so there exists some $y \in G_{k}$ such that $f_{n-1, n}(g) = f_{n-1,n}(f_{n,k}(y)) = f_{n-1,k}(y) \in \text{Im}(f_{n-1,k})$. With $k \geq n$ arbitrary it follows that $f_{n}(g) \in G_{n}^{\circ}$. Thus $(G_{n}^{\circ})$ forms an inverse system. Moreover, we then have the following short exact sequence of inverse systems $$0 \rightarrow G_{n}^{\circ} \rightarrow G_{n} \rightarrow G_{n}/G_{n}^{\circ} \rightarrow 0.$$ Thus we have a long exact sequence $$0 \rightarrow \varprojlim G_{n}^{\circ} \rightarrow \varprojlim G_{n} \rightarrow \varprojlim G_{n}/G_{n}^{\circ} \rightarrow \varprojlim{}^{1}  G_{n}^{\circ} \rightarrow \varprojlim{}^{1} G_{n} \rightarrow \varprojlim{}^{1} G_{n}/G_{n}^{\circ} \rightarrow 0.$$
	
	Observe that $(G_{n}^{\circ})$ satisfies the Mittag-Leffler condition, so $\varprojlim^{1}  G_{n}^{\circ} = 0$ and $\varprojlim^{1} G_{n} \cong \varprojlim{}^{1} G_{n}/G_{n}^{\circ}$.  Moreover, by construction, it follows that $\varprojlim G_{n}/G_{n}^{\circ} = 0$, thus from the long exact sequence it follows that $\varprojlim G_{n}^{\circ} \cong \varprojlim G_{n}$ as groups. Finally the bijective continuous map $\varprojlim G_{n}^{\circ} \rightarrow \varprojlim G_{n}$ is a closed map between Hausdorff spaces thus the map is a homeomorphism of topological groups.
\end{proof}

\vspace{5mm}

\section{Topology}
In this section we characterize topological properties of $\varprojlim G_{n}$, such as compactness and connectedness, by restricting our investigations to only the topological structure of the inverse limit as opposed to its structure as a topological group, see Proposition \ref{invtop1}. We do this by considering a homeomorphic description of $\varprojlim G_{n}$ given by the inverse system of surjective maps constructed in the section above. \\

\begin{remark} With proposition \ref{surj rest} in mind we note that from this point forward we will proceed to only consider a general inverse system with surjective homomorphisms, $(G_{n}, f_{n, m})$. Moreover, we only care to preserve the topology of $\varprojlim G_{n}$ instead of its structure as a topological group.\\
\end{remark} 

We first recharacterize the inverse limit with respect to the kernels of each homomorphism in the inverse system. 

\begin{remark}\label{totdisc}
Consider the surjective homomorphism $f_{1,2}: G_{2} \rightarrow G_{1}$ and denote the following topological spaces by $X_{1}:= G_{1}$ and $X_{2} := \text{ker}(f_{1,2})$, then as topological spaces we have $G_{2} \cong X_{1} \times X_{2}$. To see this, it suffices to construct a bijection since we have the discrete topology on both spaces. For each $a \in X_{1}$ we make a choice of some $x_{a} \in f_{1,2}^{-1}(a)$. With this choice consider the map sending $X_{1}\times X_{2} \ni (a,b) \mapsto x_{a} + b \in G_{2}$. To show surjectivity consider $c \in G_{2}$ and $a = f_{1,2}(c)$, then $(a, c - x_{a}) \mapsto c$. For injectivity, assume that for $(a,b),(c,d) \in X_{1}\times X_{2}$ that $x_{a} + b = x_{c} + d$. Then \begin{align*}
	a = f_{1,2}(x_{a}) = f_{1,2}(x_{a} + b) = f_{1,2}(x_{c} + d) = f_{1,2}(x_{c}) = c.
\end{align*} Thus $x_{a} = x_{c}$ and this in turn implies that $b = d$. To rephrase, we have the following commutative diagram in the category of sets \[\begin{tikzcd}
	G_{2}\arrow{r}{f_{1,2}}\arrow{d}{\cong} & G_{1}\arrow{d}{\cong}  \\
	X_{1}\times X_{2} \arrow{r}{\tau_{1}} & X_{1}
\end{tikzcd} \] where $\tau_{i}$ denotes the projection onto the first $i$ coordinates. Continuing inductively in this manner and defining $X_{n} := \text{ker}(f_{n-1, n})$, it follows that each $G_{n}$ in the inverse system can be written as $G_{n} \cong X_{1} \times \cdots \times X_{n}$. This allows us to rewrite the inverse limit, while preserving the topological structure, as \[\begin{tikzcd}
	& \cdots \rightarrow X_{1}\times X_{2} \times X_{3} \times X_{4} \arrow{r}{\tau_{3}} &X_{1} \times X_{2} \times X_{3} \arrow{r}{\tau_{2}} &X_{1}\times X_{2} \arrow{r}{\tau_{1}} &X_{1} &{}
\end{tikzcd} \] where each map in the inverse system is the map dropping the last coordinate. As inverse limits of topological spaces we have $$\varprojlim G_{n} \cong \varprojlim X_{1} \times \cdots \times X_{n}.$$ 
\end{remark}

With thus characterization of the inverse limit, we may begin to classify the topologies on $\varprojlim G_{n}$ via possible cardinalities of the kernels, $X_{n}$. To do so we need the following definition. \\

\begin{definition}
	Let $(A_{n}, f_{n-1,n})$ be an inverse system of topological spaces. The inverse limit $\varprojlim A_{n}$ is said to \textit{stabilize} if there exists some $n \in \mathbb{N}$ such that for all $m \geq n$, $f_{m,m+1}$ is a homeomorphism.
\end{definition}

For an inverse limit that stabilizes, after some point each $f_{m, m+1}$ is a homeomorphism so no new information is being added and thus ``collapsing" the inverse limit up to this point should not change its topology. In the kernel characterization, if $\varprojlim X_{1}\times \cdots \times X_{n}$ stabilizes, then after some point each kernel is equal to $\{0\}$, hence there exists some $n \in \mathbb{N}$ such that for $m \geq n$, $X_{m} \cong \{0\}$.

\begin{lemma}\label{lma2}
	Let $\varprojlim X_{1} \times 
	\cdots \times X_{n}$ be as above. If $\varprojlim X_{1} \times 
	\cdots \times X_{n}$ stabilizes then there exists a $k \in \mathbb{N}$ such that $$\varprojlim X_{1} \times 
	\cdots \times X_{n} \cong X_{1} \times 
	\cdots \times X_{k}.$$
\end{lemma}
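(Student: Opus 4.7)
The plan is to exploit the observation made immediately before the lemma: stabilization of the inverse system $(X_1 \times \cdots \times X_n, \tau_n)$ forces the existence of an index $k$ beyond which every kernel factor $X_m$ is a singleton. Granted this, the canonical projection onto the first $k$ factors will be the desired homeomorphism. For the observation itself, I would note that each bonding map $\tau_n$ is simply the projection dropping the last coordinate; as a map between discrete spaces it is bijective, and hence a homeomorphism, precisely when $X_{n+1}$ consists of a single point. Thus the stabilization hypothesis produces some $k$ with $X_m = \{0\}$ for all $m > k$.

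Next, I would propose $\pi_k \colon \varprojlim(X_1 \times \cdots \times X_n) \to X_1 \times \cdots \times X_k$ as the homeomorphism. Continuity of $\pi_k$ is immediate from the definition of the inverse limit topology. For surjectivity, given $(a_1, \ldots, a_k)$ one extends by setting $y_n = (a_1, \ldots, a_n)$ for $n \leq k$ and $y_n = (a_1, \ldots, a_k, 0, \ldots, 0)$ for $n > k$; coherence under the $\tau_n$ is automatic, and the tail extension is forced because each $X_m$ with $m > k$ is a singleton. Injectivity is similar: two coherent sequences agreeing on the first $k$ coordinates must agree everywhere, since earlier coordinates are determined by iterated applications of $\tau$ and later coordinates lie in singletons.

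For the inverse to be continuous, I would recall that $\varprojlim(X_1 \times \cdots \times X_n)$ carries the subspace topology from the ambient product $\prod_n (X_1 \times \cdots \times X_n)$, so it suffices to verify continuity componentwise; each component is either a coordinate projection of the input $(a_1, \ldots, a_k)$ or a constant map landing in a singleton factor, and both are continuous. I do not anticipate a real obstacle here: all of the content lies in translating the hypothesis ``stabilization'' into the concrete statement ``trivial tail kernels,'' after which bijectivity and bicontinuity are a formal consequence of the product topology.
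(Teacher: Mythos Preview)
Your argument is correct, but it proceeds along a different route from the paper. You translate stabilization into the statement that $X_m=\{0\}$ for $m>k$ and then verify by hand that the projection $\pi_k$ is a bijection with continuous inverse. The paper instead invokes its Proposition~\ref{pr1} on cofinal subsystems: since $\{k,k+1,k+2,\dots\}$ is cofinal in $\mathbb{N}$, one has $\varprojlim_n X_1\times\cdots\times X_n \cong \varprojlim_n X_1\times\cdots\times X_{n+k}$, and the right-hand system consists of a single object $X_1\times\cdots\times X_k$ with identity bonding maps. Your approach is more self-contained and makes the homeomorphism explicit; the paper's approach is shorter because it reuses machinery already in place, and it illustrates how cofinal restriction absorbs this kind of tail-triviality argument in one stroke.
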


\begin{proof}
	Choose $k \in \mathbb{N}$ such that for all $n \geq k$, $f_{n+1,n}$ is an isomorphism. Note that subsequence $\{n+k, n+k+1, n+k+2, ...\} \subset \mathbb{N}$ is a cofinal subsequence, thus by \ref{pr1} above we can restrict to the subsequence without changing the topology of the inverse limit, thus we have \begin{align*}
		\varprojlim_{n}  X_{1} \times 
		\cdots \times X_{n} \cong \varprojlim_{n} X_{1} \times 
		\cdots \times X_{n + k}.
	\end{align*} Then right hand side is indeed homeomorphic to $ X_{1} \times 
	\cdots \times X_{k}$ as each space in the inverse system is $ X_{1} \times 
	\cdots \times X_{k}$ and each map a homeomorphism.
\end{proof}

There are two possible cases: \begin{align*}
	\textbf{Case I}& \,- \,\varprojlim G_{n}\, \text{ stabilizes,} \\
	\textbf{Case II}& \,- \text{ Otherwise.}
\end{align*} In case I there are two further sub-cases, say cases I.1 and I.2., where either each $X_{n}$ is finite or there exists some infinite kernel. In case I.1 where each $X_{n}$ is finite and we stabilize, then by lemma \ref{lma2} our inverse limit is a finite space and thus compact. 

\begin{lemma}[Case I.1]\label{lma2b}
	Let  $\varprojlim X_{1} \times 
	\cdots \times X_{n}$ be a stabilizing inverse limit as above with all kernels of finite cardinality, then $\varprojlim X_{1} \times 
	\cdots \times X_{n}$ is homeomorphic to a finite set $X_{1} \times 
	\cdots \times X_{k}$ and thus compact. 
\end{lemma}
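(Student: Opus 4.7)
The proof is essentially a one-line invocation of the preceding Lemma \ref{lma2} together with elementary facts about finite discrete spaces, so my plan is to present it in that order and flag where the content actually lies.

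First, I would apply Lemma \ref{lma2} directly: since the hypothesis is exactly that $\varprojlim X_1 \times \cdots \times X_n$ stabilizes, the lemma supplies a $k \in \mathbb{N}$ such that
\[
\varprojlim X_1 \times \cdots \times X_n \;\cong\; X_1 \times \cdots \times X_k
\]
as topological spaces. This takes care of the homeomorphism claim.

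Next, I would observe that because each $X_i$ is, by construction in Remark \ref{totdisc}, the kernel of a homomorphism of discrete abelian groups and is assumed to have finite cardinality, the finite product $X_1 \times \cdots \times X_k$ is a finite discrete space. A finite topological space is automatically compact (any open cover admits a finite subcover trivially), and compactness is preserved by homeomorphism, so $\varprojlim X_1 \times \cdots \times X_n$ is compact.

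There is really no obstacle here: the content of the statement has already been done inside Lemma \ref{lma2}, and the present lemma is just the packaging of that result together with the observation that a finite product of finite sets is finite. I would therefore keep the proof to two or three sentences rather than elaborate further.
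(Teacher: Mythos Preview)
Your proposal is correct and matches the paper's approach exactly: the paper treats this lemma as an immediate consequence of Lemma \ref{lma2} together with the finiteness of each $X_i$, and does not even supply a separate proof environment beyond the one-line remark preceding the statement. Keeping your write-up to two or three sentences as you suggest is entirely appropriate.
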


In case I.2 where there exist some kernels of infinite cardinality we claim that the inverse limit is locally compact.

\begin{lemma}[Case I.2]\label{lma3}
	Let  $\varprojlim X_{1} \times 
	\cdots \times X_{n}$ be a stabilizing inverse limit as above with finitely many kernels of infinite cardinality, then $\varprojlim X_{1} \times 
	\cdots \times X_{n} \cong \N$, and thus locally compact. 
\end{lemma}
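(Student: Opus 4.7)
The plan is to combine Lemma~\ref{lma2} with elementary facts about discrete spaces. Since the inverse limit is assumed to stabilize, Lemma~\ref{lma2} supplies some $k \in \mathbb{N}$ for which
\[
\varprojlim X_1 \times \cdots \times X_n \;\cong\; X_1 \times \cdots \times X_k
\]
as topological spaces, reducing the problem to understanding a finite product of kernels.

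Next, I would observe that each $X_n = \ker(f_{n-1,n})$ is a subspace of the discrete countable group $G_n$, so it inherits the discrete topology and is at most countable. A finite product of discrete spaces is discrete, hence $X_1 \times \cdots \times X_k$ is a discrete space. Under the hypothesis of Case I.2, at least one factor $X_i$ has infinite cardinality (otherwise we would be in Case I.1), so the product is a countably infinite set.

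Any countably infinite discrete space is homeomorphic to $\mathbb{N}$ (with its discrete topology) via any bijection, yielding the claimed homeomorphism $\varprojlim X_1 \times \cdots \times X_n \cong \mathbb{N}$. Local compactness is then immediate, since in any discrete space every singleton is a compact open neighborhood of its point.

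There is no substantive obstacle here; the lemma is essentially a bookkeeping consequence of Lemma~\ref{lma2}. The only step worth recording explicitly is that discreteness is preserved under finite products and passage to subspaces, so the hypothesis that each $G_n$ is a discrete countable group forces $X_1 \times \cdots \times X_k$ to be a countable discrete space, at which point the conclusion reduces to the uniqueness up to homeomorphism of the countably infinite discrete space.
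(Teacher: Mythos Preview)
Your proof is correct and follows essentially the same approach as the paper: invoke Lemma~\ref{lma2} to reduce to a finite product $X_1 \times \cdots \times X_k$, observe this is a countably infinite discrete space, and conclude that any bijection with $\mathbb{N}$ is a homeomorphism. If anything, you are slightly more explicit than the paper about why the product is infinite (at least one factor is infinite by the case hypothesis) and why it is discrete (finite products of discrete spaces are discrete).
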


\begin{proof}
	By lemma \ref{lma2} the inverse limit is homeomorphic to $ X_{1} \times 
	\cdots \times X_{n}$ for some $n \in N$. This space will be infinite discrete and thus locally compact. Since each $X_{i}$ is countable, thus $ X_{1} \times 
	\cdots \times X_{n}$ is countable as well. This implies the existence of a bijective map $f: \mathbb{N} \rightarrow X_{1} \times \cdots \times X_{n}$. Since both the domain and codomain have the discrete topology, it follows that $f$ and $f^{-1}$ are continuous. Thus $ X_{1} \times 
	\cdots \times X_{n} \cong \mathbb{N}$.
\end{proof}

Before proceeding to the next case we prove the following result which applies to all inverse limits of discrete abelian groups.

\begin{lemma}\label{totdisc lemma}
	Let  $\varprojlim X_{1} \times 
	\cdots \times X_{n}$ be as defined in remark \ref{totdisc}, then $\varprojlim X_{1} \times 
	\cdots \times X_{n}$ is totally disconnected.
\end{lemma}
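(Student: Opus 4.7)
The plan is to prove that $\varprojlim X_1 \times \cdots \times X_n$ is zero-dimensional (admits a basis of clopen sets) and Hausdorff; together these imply total disconnectedness. Hausdorffness is already in hand since Section 3 shows the inverse limit is metrizable.

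First I would observe that each kernel $X_i = \ker(f_{i-1,i})$ is a subgroup of the discrete group $G_i$ and is therefore discrete, so every finite product $X_1 \times \cdots \times X_n \cong G_n$ is discrete, and in particular every singleton of $G_n$ is clopen. By Remark \ref{basis intersection}, the sets $\pi_n^{-1}(S)$ with $S \subset G_n$ form a basis for the topology on $\varprojlim G_n$; specializing to $S = \{g\}$ shows that each $\pi_n^{-1}(\{g\})$ is open. Its complement is
$$\pi_n^{-1}(G_n \setminus \{g\}) \;=\; \bigcup_{h \neq g} \pi_n^{-1}(\{h\}),$$
which is again a union of basic open sets, so $\pi_n^{-1}(\{g\})$ is also closed. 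Hence $\varprojlim G_n$ has a basis of clopen sets.

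To finish, I would take distinct points $x \neq y$ in $\varprojlim G_n$. They must disagree in some coordinate, so there exists $n$ with $x_n \neq y_n$, and then $U := \pi_n^{-1}(\{x_n\})$ is a clopen set containing $x$ but not $y$. This produces a separation $\varprojlim G_n = U \sqcup (\varprojlim G_n \setminus U)$ into disjoint nonempty clopen subsets placing $x$ and $y$ in different pieces, so $x$ and $y$ lie in distinct connected components. Since $x,y$ were arbitrary, every connected component is a singleton.

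There is no genuine obstacle here: this is the standard fact that any inverse limit of discrete spaces along cylinder projections is totally disconnected. The only care required is to appeal to Remark \ref{basis intersection} to recognize that $\pi_n^{-1}(\{g\})$ really is a basis element (not merely a subbasic one), and to note that being metrizable supplies the Hausdorff axiom needed for the separation argument to identify connected components with points.
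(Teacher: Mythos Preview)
Your proposal is correct and follows essentially the same approach as the paper: both arguments separate two distinct points $x\neq y$ by a clopen set obtained as the $\pi_k$-preimage of a subset of the discrete group $G_k\cong X_1\times\cdots\times X_k$. The only cosmetic difference is that the paper fixes just the single kernel coordinate where $x$ and $y$ differ (taking $V=\{x_k\}\times X_{k-1}\times\cdots\times X_1$), while you fix the full $G_n$-coordinate via $\pi_n^{-1}(\{x_n\})$; either choice works, and your preliminary remark that the $\pi_n^{-1}(\{g\})$ form a clopen basis is a clean way to package the argument.
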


\begin{proof}
	For total disconnectedness, let $x, y \in \varprojlim X_{1} \times \cdots \times X_{n}$ be distinct points. Then there exists some $k \in \mathbb{N}$ such that $x_{k} \neq y_{k} \in X_{k}$ then define $$U := X_{k} \backslash \{x_{k}\} \times X_{k-1} \times \cdots \times X_{1} \;\;\text{ and }\;\; V := \{x_{k}\} \times X_{k-1} \times \cdots \times X_{1}.$$ Note that each $X_{n}$ and $X_{1} \times \cdots \times X_{n}$ have the discrete topology thus both $U$ and $V$ are open. Moreover, $U \cap V = \varnothing$ and $U \cup V = X_{k} \times X_{k-1} \times \cdots \times X_{1}$, thus $U, V$ are clopen. Thus $\pi^{-1}_{k}(U)$ is clopen and by construction $y \in \pi^{-1}_{k}(U)$ and $x \notin \pi^{-1}_{k}(U)$. Thus we have separated $x$ and $y$ by a clopen set and thus $\varprojlim X_{1} \times \cdots \times X_{n}$ is totally disconnected. 
\end{proof}

Now for Case II, where the inverse system does not stabilize, we have three sub-cases. Our first sub-case, Case II.1 is when each $X_{n}$ is finite and thus so is each $G_{n}$. The product topology of $\prod_{n=1}^{\infty} G_{n}$ is compact by Tychonoff's theorem and since $\varprojlim G_{n}$ is a closed subspace it follows that $\varprojlim G_{n}$ is compact.

\begin{lemma}[Case II.1]\label{lma4}
	Assume that $\varprojlim X_{1} \times 
	\cdots \times X_{n}$ does not stabilize and that each $X_{n}$ is finite. Then $\varprojlim X_{1} \times 
	\cdots \times X_{n}$ is compact, metrizable, perfect, and totally disconnected. Thus $\varprojlim X_{1} \times 
	\cdots \times X_{n}$ is homeomorphic the Cantor set, $\mathcal{C}$.
\end{lemma}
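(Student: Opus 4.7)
The plan is to invoke Brouwer's classical characterization of the Cantor set: any nonempty space that is compact, metrizable, perfect, and totally disconnected is homeomorphic to $\mathcal{C}$. Three of the four properties come essentially for free from material already in the paper, so the bulk of the work is packaging them together and verifying perfectness.

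First I would observe that, because the bonding maps in the $X$-picture from Remark \ref{totdisc} are simply the projections dropping the last coordinate, the inverse limit is set-theoretically and topologically the full product space $\prod_{n=1}^\infty X_n$ endowed with the product topology. With each $X_n$ finite and discrete, $\prod_{n=1}^\infty X_n$ is compact by Tychonoff's theorem, which gives compactness. Metrizability is the content of the earlier metrization lemma in Section 3, and total disconnectedness is Lemma \ref{totdisc lemma}. Nonemptiness is automatic since each $X_n$ is nonempty.

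The one property that requires a small argument, and which I expect to be the only real step of the proof, is perfectness. Let $x = (x_n) \in \varprojlim X_1 \times \cdots \times X_n$ and let $U$ be a basic open neighborhood of $x$, which by Remark \ref{basis intersection} we may take of the form $\pi_k^{-1}(\{(x_1, \ldots, x_k)\})$ for some $k$. Because the inverse system does not stabilize, for every such $k$ there exists $m > k$ with $|X_m| \geq 2$; fix such an $m$ and choose $y_m \in X_m$ with $y_m \neq x_m$. Define $y$ by $y_i = x_i$ for $i \neq m$ and by the chosen value at $i = m$. This is a legitimate element of the inverse limit (coherence is automatic in the product description), it agrees with $x$ on the first $k$ coordinates, so $y \in U$, and it differs from $x$ at index $m$, so $y \neq x$. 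Hence $x$ is not isolated.

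The only potential snag is making sure the non-stabilization hypothesis really gives arbitrarily large $m$ with $|X_m| \geq 2$, not merely some single $m$: if only finitely many kernels were nontrivial, we would be in the stabilizing case treated by Lemma \ref{lma2}, contradicting our assumption. With that observation the four properties are in hand and Brouwer's theorem yields $\varprojlim X_1 \times \cdots \times X_n \cong \mathcal{C}$.
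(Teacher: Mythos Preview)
Your proposal is correct and follows essentially the same route as the paper: compactness via Tychonoff, metrizability from the earlier lemma, total disconnectedness from Lemma \ref{totdisc lemma}, and perfectness by using non-stabilization to find a nontrivial kernel $X_m$ beyond any given index and altering the $m$th coordinate. The only cosmetic differences are that you name Brouwer's characterization explicitly and argue compactness directly on $\prod X_n$ rather than as a closed subset of $\prod G_n$, and you take the extra care to note nonemptiness and to justify that infinitely many $X_m$ are nontrivial.
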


\begin{proof}
	It follows from above that $\varprojlim X_{1} \times 
	\cdots \times X_{n}$ is compact and metrizability follows from Lemma 1.2. To show perfectness, we must show that for any $x \in \varprojlim X_{1} \times \cdots \times X_{n}$ and any basis neighborhood $U$ of $x$, there exists $y\in U$ such that $x \neq y$. Now $U$ is of the form $\pi_{j}^{-1}(S)$ for some $j \in \mathbb{N}$ where $\pi_{j}: \varprojlim X_{1} \times 
	\cdots \times X_{n} \longrightarrow X_{1} \times 
	\cdots \times X_{j}$ sending $x \mapsto \{x_{1}, ..., x_{j}\}$ and $S$ is any subset of $X_{1} \times 
	\cdots \times X_{j}$. Since the inverse limit does not stabilize, there exists some $k > j$ such that $X_{k} = \ker(f_{k-1,k})$ is non-trivial or $X_{k}\backslash \{x_{k}\} \neq \varnothing $. Define $y = (y_{n}) \in \varprojlim X_{1} \times \cdots \times X_{n}$ by $y_{n} = x_{n}$ for all $n \neq k$, and $y_{k}$ is some choice of element in $X_{k}\backslash \{x_{k}\}$. Then $y \in \pi^{-1}_{j}(S) = U$ and $x \neq y$. Thus $x$ is not an isolated point. Since this is true for any $x$, we have that $\varprojlim X_{1} \times \cdots \times X_{n}$ is perfect.
	
	Total disconnectedness follows from Lemma \ref{totdisc lemma}. Thus $\varprojlim X_{1} \times 
	\cdots \times X_{n}$ is homeomorphic the Cantor set.
\end{proof}

The next sub-case, Case II.2, is when there are finitely many non-finite $X_{n}$.

\begin{lemma}[Case II.2]\label{lma5}
	Assume that $\varprojlim X_{1} \times 
	\cdots \times X_{n}$ does not stabilize and that there exist only finitely many infinite $X_{n}$. Then $\varprojlim X_{1} \times \cdots \times X_{n} \cong \mathbb{N} \times \mathcal{C}$, thus is locally compact.
\end{lemma}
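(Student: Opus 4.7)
My plan is to split the inverse limit at the last index where an infinite kernel appears and recognize the result as a product of a countably infinite discrete space and a Cantor set. By hypothesis, there exists $N \in \mathbb{N}$ such that $X_n$ is finite for every $n > N$, while at least one $X_n$ with $n \leq N$ is infinite (otherwise we would be in Case II.1). Using the identification from Remark \ref{totdisc}, the inverse limit is homeomorphic to $\prod_{n=1}^\infty X_n$ with the product topology (each $X_n$ discrete), and I would split this product as
\begin{align*}
\prod_{n=1}^\infty X_n \;\cong\; \left(\prod_{n=1}^N X_n\right) \times \left(\prod_{n=N+1}^\infty X_n\right).
\end{align*}

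For the first factor, a finite product of countable discrete spaces with at least one infinite factor is a countably infinite discrete space, hence homeomorphic to $\mathbb{N}$ by the same countability-and-discreteness argument used in Lemma \ref{lma3}. For the second factor, I would view $\prod_{n=N+1}^\infty X_n$ as the inverse limit of the tail system $(X_{N+1} \times \cdots \times X_n, \tau)$ indexed by $n \geq N+1$ with ``drop last coordinate'' maps, whose kernels are $X_{N+1}, X_{N+2}, \ldots$. All of these kernels are finite by choice of $N$, and I would show that infinitely many are non-trivial. This is where the two standing hypotheses interact: non-stabilization of the original system forces infinitely many $n$ with $X_n \neq \{0\}$, and since the infinite kernels are confined to the first $N$ indices, infinitely many of these non-trivial kernels must lie beyond $N$. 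Hence the tail system itself does not stabilize, satisfies the hypotheses of Lemma \ref{lma4}, and its inverse limit is homeomorphic to the Cantor set $\mathcal{C}$.

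Combining the two factors yields $\varprojlim X_1 \times \cdots \times X_n \cong \mathbb{N} \times \mathcal{C}$, which is locally compact Hausdorff as a product of two locally compact Hausdorff spaces.

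The step I expect to be most delicate is the bookkeeping around non-stabilization of the tail: I must ensure that trimming off the first $N$ coordinates leaves a non-stabilizing system of finite kernels, so that Case II.1 genuinely applies. Once that observation is in place, the argument reduces to formal manipulation of products with the product topology and citation of Lemmas \ref{lma3} and \ref{lma4}.
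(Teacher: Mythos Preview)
Your proposal is correct and follows essentially the same route as the paper: split the system at the last index $N$ (the paper's $k$) where an infinite kernel appears, identify the finite-index part with a countably infinite discrete space $\cong \mathbb{N}$ via the argument of Lemma~\ref{lma3}, and apply Lemma~\ref{lma4} to the tail of finite kernels to obtain $\mathcal{C}$. The only visible difference is presentational: you pass directly to the product $\prod_{n\geq 1} X_n$ and split it, whereas the paper invokes cofinality (Proposition~\ref{pr1}) to shift the index and then factors out $X_0 := X_1\times\cdots\times X_k$; your explicit verification that the tail system does not stabilize is a detail the paper leaves implicit when citing Lemma~\ref{lma4}.
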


\begin{proof}
	Let $X_{k}$ denote the last occurring infinite kernel. Then as before we may reduce our inverse limit to the subsequence that begins at $X_{1} \times \cdots \times X_{k}$ and denote $X_{0} := X_{1} \times \cdots \times X_{k}$. It is the straightforward to check that \begin{align*}
		X_{0} \times \varprojlim (X_{k+1} \times \cdots \times X_{n}) \cong \varprojlim X_{1} \times 
		\cdots \times X_{n + k}.
	\end{align*} Note that by lemma \ref{lma4} we have that $\varprojlim (X_{k+1} \times \cdots \times X_{n})$ is compact.  Moreover $X_{0}$ is countable, infinite, and discrete and thus by Lemma 2.4 is homeomorphic to $\mathbb{N}$. Thus by Lemma 2.3 above we have that \begin{align*}
		X_{0} \times \varprojlim (X_{k+1} \times \cdots \times X_{n}) \cong \mathbb{N} \times \mathcal{C}.
	\end{align*} Since we are then taking the product space of a compact space with and infinite discrete space, the resulting space is locally compact.
\end{proof}

Finally, for the last case, Case II.3, where there exist infinitely many infinite kernels, we show that $\varprojlim X_{1} \times \cdots \times X_{n}$ is not locally compact.

\begin{lemma}[Case II.3]\label{lma6}
	Assume that $\varprojlim X_{1} \times 
	\cdots \times X_{n}$ does not stabilize and that there exist infinitely many non-finite $X_{n}$. Then $\varprojlim X_{1} \times \cdots \times X_{n}$ is not locally compact and homeomorphic $\mathbb{N}^{\mathbb{N}}$.
\end{lemma}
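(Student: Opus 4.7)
The plan is to identify $\varprojlim X_1 \times \cdots \times X_n$ with the infinite product $\prod_{n=1}^{\infty} X_n$, and then exhibit a homeomorphism to $\mathbb{N}^{\mathbb{N}}$. First I would observe that, because each bonding map in the reformulated inverse system is the projection $\tau_n$ dropping the last coordinate (Remark \ref{totdisc}), a coherent sequence is nothing more than a single choice from each kernel. Hence the natural map $\varprojlim X_1 \times \cdots \times X_n \to \prod_{n=1}^{\infty} X_n$ is a set-theoretic bijection, and comparing the basis described in Remark \ref{basis intersection} with the standard product basis of the form $S \times \prod_{n > m} X_n$ shows that it is in fact a homeomorphism onto the product endowed with the product topology.

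The second step is the identification with $\mathbb{N}^{\mathbb{N}}$. By hypothesis there is a strictly increasing sequence $n_1 < n_2 < \cdots$ of indices for which $X_{n_k}$ is infinite. Setting $n_0 := 0$, I group consecutive factors into blocks $B_k := X_{n_{k-1}+1} \times \cdots \times X_{n_k}$. Each $B_k$ is a finite product of countable discrete spaces containing at least one countably infinite factor, hence $B_k$ is itself countable, infinite, and discrete, so $B_k \cong \mathbb{N}$ by the same reasoning as in Lemma \ref{lma3}. Associativity of the product topology then yields
\[
\prod_{n=1}^{\infty} X_n \;\cong\; \prod_{k=1}^{\infty} B_k \;\cong\; \mathbb{N}^{\mathbb{N}}.
\]

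For the failure of local compactness, I would argue directly in $\prod_n X_n$. Since each $X_n$ is discrete, any compact $K \subseteq \prod_n X_n$ has a finite projection $\pi_m(K) \subseteq X_1 \times \cdots \times X_m$ for every $m$. On the other hand, any nonempty basic open set has the form $U = U_1 \times \cdots \times U_k \times \prod_{n > k} X_n$, and since infinitely many kernels are infinite I can pick some $m > k$ with $X_m$ infinite, making $\pi_m(U) \supseteq U_1 \times \cdots \times U_k \times X_{k+1} \times \cdots \times X_m$ infinite. Thus $U$ cannot be contained in any compact set, so no point of $\prod_n X_n$ has a compact neighborhood.

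I do not foresee a real obstacle here: the substance of the argument is the recognition that a ``drop-the-last-coordinate'' inverse system has inverse limit equal to the bona fide infinite product, after which the grouping into $\mathbb{N}$-valued blocks and the projection argument for non-local-compactness are both short. The one step requiring a little care is the topology identification in the first paragraph, but matching basis elements via Remark \ref{basis intersection} handles it directly.
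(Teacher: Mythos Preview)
Your proof is correct and follows essentially the same route as the paper: both arguments group the factors into blocks ending at the infinite kernels so that each block is a countably infinite discrete space $\cong \mathbb{N}$, and both deduce non-local-compactness from the presence of an infinite tail factor past any fixed index. The only cosmetic difference is that you first identify the inverse limit with the genuine product $\prod_n X_n$ and then invoke associativity of the product topology, whereas the paper stays in the inverse-limit language and reaches the same blocks by passing to the cofinal subsequence $(G_{n_i})$ via Proposition~\ref{pr1}.
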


\begin{proof}
	Let $x$ be any element in $\varprojlim X_{1} \times 
	\cdots \times X_{n}$, and consider a basis neighborhood of $x$. Any basis neighborhood of $x$ is of the form $\pi^{-1}_{n}(S)$ where $S$ is any subset of $G_{n}$ and will contain the closed neighborhood $\pi^{-1}_{n}(\{x_{n}\}) \subset \pi^{-1}_{n}(S)$. However, observe that $\pi^{-1}_{n}(\{x_{n}\})$ can be written out as $$ \cdot\cdot\cdot \times X_{n+2} \times X_{n+1}\times \{x_{n}\} \times \{x_{n-1}\} \times \cdot\cdot\cdot \times \{x_{1}\}.$$ With infinitely many non-finite kernels $X_{j}$ appearing for $j\geq n$ for any $n \in \mathbb{N}$ it follows that $\pi^{-1}_{n}(\{x_{n}\})$ is not compact. Thus $\varprojlim X_{1} \times \cdots \times X_{n}$ is not locally compact.
	
	Let $\{n_{i}\}_{i=1}$ be subsequence indexing the countably many infinite kernels $X_{n_{i}} = \ker\big(f_{n_{i}-1,n_{i}}\big)$. Consider then the cofinal subsequence $(G_{n_{i}},f_{n_{i-1},n_{i}})$ of our original inverse system $(G_{n}, f_{n-1,n})$. Then by Proposition \ref{pr1}, we have that $$\varprojlim X_{1} \times 
	\cdots \times X_{n} \cong \varprojlim G_{n} \cong   \varprojlim G_{n_{i}} \cong \varprojlim Y_{1}\times \cdots \times Y_{i}$$ where we have that $Y_{i} := \ker(f_{n_{i-1},n_{i}}) \cong \mathbb{N}$. Thus we may enumerate each $Y_{i}$ as $Y_{i} = \{y^{k}_{i}\}_{k=1}^{\infty}$. Now define a map $$\Phi: \varprojlim Y_{1}\times \cdots \times Y_{i} \longrightarrow \mathbb{N}^\mathbb{N} \;\;\text{ sending }\;\; y = \{y^{k_{i}}_{i}\}_{i=1}\mapsto \{k_{i}\}_{i=1}.$$ Let $\phi_{i}$ be the map from $Y_{i}$ to the $i$th copy of $\mathbb{N}$ in $\mathbb{N}^\mathbb{N}$ sending $y^{k_{i}}_{i}$ to $k_{i}$.  A basis element for the topology on $\mathbb{N}^\mathbb{N}$ is of the form $$\bigcap_{j=1}^{k} p^{-1}_{j}(S_{j}),$$ where $p_{j}$ is the projection onto the $j$th coordinate. Then $$\Phi^{-1}\bigg(\bigcap_{j=1}^{k} p^{-1}_{j}(S_{j})\bigg) = \pi_{k}^{-1}\big(\phi^{-1}_{1}(S_{1}) \times \cdot \times \phi^{-1}_{k}(S_{k}) \big)$$ where $\phi^{-1}_{1}(S_{1}) \times \cdot \times \phi^{-1}_{k}(S_{k}) \subset Y_{1}\times \cdots \times Y_{k}$. This shows that both $\Phi$ and $\Phi^{-1}$ are continuous.
\end{proof}

We summarize our results on the following table. 

\begin{proposition}\label{invtop1}
	Let $(G_{\alpha}, f_{\alpha, \beta})_{\alpha, \beta \in \Lambda}$ be an inverse system of discrete abelian groups with $\Lambda$ admitting a countable cofinal subset. Then  $\varprojlim G_{\alpha}$ is homeomorphic to one of the following:
	
	\begin{table}[h]
		\centering
		\begin{tabular}{|l|c|c|c|}
			\hline
			& Stabilizes & Compactness & Homeomorphic to                              \\ \hline
			\hyperref[lma2b]{Case I.1}  & Yes    & compact    & Finite set                      \\ \hline
			\hyperref[lma3]{Case I.2}  & Yes & locally compact      & $\mathbb{N}$                       \\ \hline
			\hyperref[lma4]{Case II.1} & No   & compact      & $\mathcal{C}$                               \\ \hline
			\hyperref[lma5]{Case II.2} & No   & locally compact      &  $\mathbb{N} \times \mathcal{C}$  \\ \hline
			\hyperref[lma6]{Case II.3} & No  & not locally compact     &  $\mathbb{N}^{\mathbb{N}}$ \\ \hline
		\end{tabular}
	\end{table} where $\mathcal{C}$ denotes the Cantor set.
\end{proposition}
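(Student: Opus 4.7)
The plan is to recognize this proposition as essentially a bookkeeping statement that packages the five case lemmas (\ref{lma2b}, \ref{lma3}, \ref{lma4}, \ref{lma5}, \ref{lma6}) into a single classification. The real work has already been done; what remains is to reduce a general inverse system to the setup those lemmas assume, and to verify that the five cases are exhaustive and mutually exclusive.

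First I would perform three normalization steps in sequence. Since $\Lambda$ admits a countable cofinal subset, Proposition \ref{pr1} gives a homeomorphism $\varprojlim_{\alpha \in \Lambda} G_\alpha \cong \varprojlim_{n \in \mathbb{N}} G_n$, so we may assume the system is indexed by $\mathbb{N}$. Next, Proposition \ref{surj rest} allows us to replace $(G_n, f_{n,m})$ by the surjective inverse system $(G_n^\circ, f_{n,m}|_{G_n^\circ})$ without changing the inverse limit as a topological space. Finally, the inductive splitting in Remark \ref{totdisc} gives a homeomorphism $\varprojlim G_n^\circ \cong \varprojlim X_1 \times \cdots \times X_n$, where $X_n := \ker(f_{n-1,n}|_{G_n^\circ})$ and the bonding maps are coordinate projections. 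After these reductions we are in the exact setting of the five case lemmas.

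The heart of the argument is then the case split. Define the dichotomy $\textbf{I}$ versus $\textbf{II}$ according to whether the reduced inverse system stabilizes (equivalently, whether only finitely many $X_n$ are non-trivial). Within $\textbf{I}$, split according to whether every non-trivial $X_n$ is finite (Case I.1) or at least one is infinite (Case I.2); note that in $\textbf{I}$ there are necessarily only finitely many infinite kernels, so no further sub-case is needed. Within $\textbf{II}$, split according to the cardinality of $\{n : |X_n| = \infty\}$: this set is empty (Case II.1), finite and non-empty (Case II.2), or infinite (Case II.3). These three disjoint possibilities clearly exhaust $\textbf{II}$, and I would make this exhaustiveness check explicit to justify that the table covers every inverse system. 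Each case is then handled by citing the corresponding lemma, which identifies the inverse limit with a finite set, with $\mathbb{N}$, with $\mathcal{C}$, with $\mathbb{N} \times \mathcal{C}$, or with $\mathbb{N}^{\mathbb{N}}$, respectively.

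The only step where one must be slightly careful is Case II.2, where I want to split off the initial segment containing all the infinite kernels from a tail with only finite kernels. Letting $k$ be the largest index with $|X_k| = \infty$, one restricts to the cofinal sub-system starting at index $k$ (justified by Proposition \ref{pr1}) to obtain a factorization $\varprojlim G_n \cong (X_1 \times \cdots \times X_k) \times \varprojlim_{n > k}(X_{k+1} \times \cdots \times X_n)$; the first factor is a countable infinite discrete space, hence $\cong \mathbb{N}$ (Case I.2 / Lemma \ref{lma3}), and the second factor falls under Case II.1 and is therefore $\cong \mathcal{C}$. This is the one place where the argument is not purely a citation, but it is already essentially carried out inside Lemma \ref{lma5}, so the proposition itself reduces to assembling the cases. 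I would close by noting that, since the proposition only asserts a homeomorphism and not a topological group isomorphism, the replacements in the first paragraph (which may alter the group structure but preserve the topology) are harmless.
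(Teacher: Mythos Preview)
Your proposal is correct and matches the paper's approach exactly: the proposition is stated in the paper as a summary table with no separate proof, the content being supplied by the reductions (Proposition~\ref{pr1}, Proposition~\ref{surj rest}, Remark~\ref{totdisc}) and the five case lemmas (\ref{lma2b}--\ref{lma6}) that precede it. Your outline of the normalization steps, the exhaustiveness check on the case split, and the remark that only the topology (not the group structure) needs to be preserved are all in line with how the paper organizes Section~5.
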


\vspace{5mm}

\section{Application to $KK$-Theory}

Finally, in this section we prove the main results of the paper by applying the results of section 5 along with the results of \cite{WilletYu25} stated below.

Let $A$ and $B$ denote separable $C^{*}$-algebras, $\pi: A \rightarrow \mathcal{L}(E)$ be a substantial graded representation on the Hilbert $B$-module $E$, where $(\pi,E) = (\pi_{0}\oplus\pi_{1}, H_{B}\oplus H_{B})$, with $\pi_{0} = \pi_{1}$ strongly absorbing representations. Here $H_{B}$ denotes the standard Hilbert $B$-module $H_{B} \cong \ell^{2}(\N) \otimes B$. We let $e \in \mathcal{L}(E)$ be the projection onto the first summand in $E$. See \cite[section 2]{WilletYu25} for relevant background.

\begin{definition}
	Let $X$ be a finite subset of the unit ball $A_{1}$ of $A$ and let $\varepsilon > 0$. We define $\mathcal{P}_{\varepsilon}^{\pi}(X,B)$ to be the set of self-adjoint contractions in $\mathcal{L}(E)$ such that \begin{itemize}
		\item[(i)] $p-e \in \mathcal{K}(E)$;
		\item[(ii)] $\|[p,a]\| < \varepsilon$ for all $a \in X$;
		\item[(iii)] $\|a(p^2 - p)\| < \varepsilon$ for all $a \in X$.
	\end{itemize} Equipping the norm topology on $\mathcal{P}_{\varepsilon}^{\pi}(X,B)$ that it inherits from $\mathcal{L}(E)$ then the controlled $KK$-theory groups is defined as \cite{WilletYu25} $$KK_{\varepsilon}^{\pi}(X,B) := \pi_{0}\big(\mathcal{P}_{\varepsilon}^{\pi}(X,B)\big).$$
\end{definition} Consider two isometries $s_{1},s_{2} \in \mathcal{B}(\ell^{2}) \subseteq \mathcal{L}(B)$ satisfying the Cuntz relation $s_{1}s_{1}^{*} + s_{2}s_{2}^{*} = 1$ and that commute with the subalgebra $A$ and $e$. We can define an operation on $KK_{\varepsilon}^{\pi}(X,B)$ by $$[p] + [q] := [s_{1}ps_{1}^{*} + s_{2}qs_{2}^{*}],$$ and indeed, this turns out to be a group operation by (Proposition 6.5, \cite{WilletYu25}). Note that this group operation gives $KK_{\varepsilon}^{\pi}(X,B)$ an abelian group structure.

\begin{definition}
	Let $\mathcal{X}$ be the set of all pairs $(X,\varepsilon)$ where $X$ is a finite subset of $A_{1}$ and $\varepsilon \in (0, \infty)$. There is partial order on $\mathcal{X}$ by defining $(X, \varepsilon) \leq (Y, \delta)$ if $\mathcal{P}_{\delta}^{\pi}(Y,B) \subseteq \mathcal{P}_{\varepsilon}^{\pi}(X,B)$.
\end{definition}

If $(X, \varepsilon) \leq (Y, \delta)$ then there is a canonical map $$\varphi_{(X,\varepsilon)}^{(Y,\delta)}: KK_{\delta}^{\pi}(Y,B) \rightarrow KK_{\varepsilon}^{\pi}(X,B),$$ defined as the canonical ``forget control" map. Equipping $KK_{\varepsilon}^{\pi}(X,B)$ with the discrete topology and group operation above then $\big(KK_{\varepsilon}^{\pi}(X,B)\big)_{(X,\varepsilon) \in \mathcal{X}}$ is an inverse system of topological groups. We may then define an inverse limit of topological groups as the following subspace of the product topology below: $$\varprojlim KK_{\varepsilon}^{\pi}(X,B) := \biggl\{ (x_{X,\varepsilon}) \in \prod_{(X,\varepsilon) \in \mathcal(X)} KK_{\varepsilon}^{\pi}(X,B)\; \bigg|\; \varphi_{(X,\varepsilon)}^{(Y,\delta)}(x_{Y, \delta}) = x_{X,\varepsilon} \biggr\}.$$ Let $\varphi_{(X,\varepsilon)}: \varprojlim KK_{\varepsilon}^{\pi}(X,B) \rightarrow KK_{\varepsilon}^{\pi}(X,B)$ denote the canonical map. Since the following set of all singletons $\big\{\{\,[p]\,\} \; | \; [p] \in KK_{\varepsilon}^{\pi}(X,B) \big\}$ is a basis for the topology on $KK_{\varepsilon}^{\pi}(X,B)$, then a basis for the inverse limit topology on $\varprojlim KK_{\varepsilon}^{\pi}(X,B)$ comprises sets of the form $\varphi_{(X,\varepsilon)}^{-1}(\{\,[p]\,\})$ where $(X, \varepsilon)$ runs through $\mathcal{X}$, see Remark \ref{basis intersection}.

Let $(a_{n})_{n=1}^{\infty}$ be a dense sequence in the unit ball $A_{1}$, and define $X_{n} := \{a_{1}, ... a_{n}\}$. Let $\varepsilon_{n} \rightarrow 0$ be an appropriate sequence tending to 0. Then the sequence $(X_{n}, \varepsilon_{n})_{n=1}^{\infty}$ is cofinal in $\mathcal{X}$ (Remark 6.7 \cite{WilletYu25}) and thus there is an isomorphism 
\begin{align*}
	\varprojlim KK_{\varepsilon}^{\pi}(X,B) \cong \varprojlim KK_{\varepsilon_{n}}^{\pi}(X_{n},B).
\end{align*} With this we may state one of the main results from $\cite{WilletYu25}$.

\begin{theorem}\label{WiYu1}
	For any pair of separable $C^{*}$-algebras $(A,B)$, the closure of zero in $KK(A,B)$ is isomorphic to $\varprojlim^1 KK_{\varepsilon_{n}}^{\pi}(X_{n},SB)$, and $KL(A,B) \cong \varprojlim KK_{\varepsilon_{n}}^{\pi}(X_{n},B)$ at topological groups where $\varprojlim^1 KK_{\varepsilon_{n}}^{\pi}(X_{n},SB)$ is endowed with the indiscrete topology. Moreover, there is a short exact sequence $$0 \rightarrow \lim_{\leftarrow}{}\!^1 KK_{\varepsilon_{n}}^{\pi}(X_{n},SB) \rightarrow KK(A,B) \rightarrow \lim_{\leftarrow} KK_{\varepsilon_{n}}^{\pi}(X_{n},B) \rightarrow 0.$$
\end{theorem}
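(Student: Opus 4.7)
The plan is to adapt the classical Milnor exact sequence argument to the controlled picture of $KK$-theory developed in \cite{WilletYu25}. First I would define a comparison homomorphism $\Psi: KK(A,B) \to \varprojlim KK^\pi_{\varepsilon_n}(X_n, B)$ by sending a Kasparov class represented by a projection $p$ to the tuple of its controlled classes $[p] \in KK^\pi_{\varepsilon_n}(X_n, B)$. Compatibility with the forget-control maps $\varphi^{(X_{n+1}, \varepsilon_{n+1})}_{(X_n, \varepsilon_n)}$ is immediate from the definition, so $\Psi$ lands in the inverse limit. Continuity of $\Psi$ holds since each basic open neighborhood of $0$ in $KK(A,B)$ (in its canonical topology) is by construction the preimage of a single point of some $KK^\pi_{\varepsilon_n}(X_n, B)$.

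Next I would show $\Psi$ is surjective with kernel exactly $\overline{\{0\}}$. Surjectivity is the assembly step: given a coherent tuple $(x_n)$, pick representative projections $p_n \in \mathcal{P}^\pi_{\varepsilon_n}(X_n, B)$ together with controlled homotopies connecting $\varphi^{(X_{n+1}, \varepsilon_{n+1})}_{(X_n, \varepsilon_n)}(p_{n+1})$ to $p_n$, then telescope them inside $\mathcal{L}(E)$ using the Cuntz isometries $s_1, s_2$ to produce a single projection whose $\Psi$-image recovers $(x_n)$. The kernel is, by the description of the canonical topology on $KK(A,B)$, the intersection of all preimages of $0 \in KK^\pi_{\varepsilon_n}(X_n, B)$, which equals $\overline{\{0\}}$. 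Since $KK(A,B) \to KL(A,B)$ is a continuous open surjection, $\Psi$ descends to a continuous bijection $KL(A,B) \to \varprojlim KK^\pi_{\varepsilon_n}(X_n, B)$ of Polish groups, hence a homeomorphism by standard Polish-group descriptive-set-theoretic facts.

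Finally, to identify $\overline{\{0\}}$ with $\varprojlim^1 KK^\pi_{\varepsilon_n}(X_n, SB)$, I would invoke the long exact sequence attached to the cone-suspension short exact sequence $0 \to SB \to CB \to B \to 0$ at the level of controlled $KK$-groups, and then feed this into the $\varprojlim$--$\varprojlim^1$ six-term sequence from Section 3. Contractibility of the cone makes the controlled groups of $CB$ vanish, and after a Bott-type degree shift the resulting sequence collapses to the asserted short exact sequence. The indiscrete topology on $\varprojlim^1$ is forced: as the closure of $\{0\}$ in a topological group, $\overline{\{0\}}$ admits no proper nontrivial open subsets.

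The main obstacle is the simultaneous assembly/telescoping of the controlled projections and connecting homotopies into a single genuine Kasparov module, which requires uniform control of the $\varepsilon_n$-commutator estimates together with compact perturbations across all levels of the inverse system; this is the technical heart of \cite{WilletYu25}. The construction of the controlled long exact sequence in the $B$-variable needed for the $\varprojlim^1$ identification presents similar uniform-control difficulties and is where most of the analytic work is concentrated.
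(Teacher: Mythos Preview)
The paper does not prove this theorem at all: it is stated explicitly as ``one of the main results from \cite{WilletYu25}'' and is invoked as a black box. There is therefore no proof in the paper to compare your proposal against.

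Your sketch is a plausible outline of the Willett--Yu argument, and the ingredients you list (the comparison map to the inverse limit, identification of its kernel with $\overline{\{0\}}$, a telescoping construction for surjectivity, and a cone--suspension long exact sequence combined with the $\varprojlim$--$\varprojlim^1$ machinery) are the expected ones. But since the present paper simply imports Theorem~\ref{WiYu1} by citation, any assessment of whether your outline matches the actual proof would have to be made against \cite{WilletYu25} itself, not against this paper. For the purposes of this paper, the ``proof'' is the reference, and your proposal goes well beyond what the paper attempts.
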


The following is (\cite{WilletYu25}, Corollary 7.9).

\begin{theorem}\label{WiYu2}
	For any pair of separable $C^{*}$-algebras $(A,B)$, the closure of zero in $KK(A,B)$ is either $\{0\}$ or uncountable.
\end{theorem}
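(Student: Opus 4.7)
The plan is to identify $\overline{\{0\}} \subseteq KK(A,B)$ with a derived limit of countable abelian groups via Theorem \ref{WiYu1}, and then establish a general dichotomy: $\varprojlim^{1}$ of any countable inverse system of countable abelian groups is either trivial or uncountable.

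By Theorem \ref{WiYu1} we have $\overline{\{0\}} \cong \varprojlim^{1} KK_{\varepsilon_{n}}^{\pi}(X_{n}, SB)$ as abelian groups. Each controlled group $KK_{\varepsilon_{n}}^{\pi}(X_{n}, SB)$ is countable, a fact that follows from its description as $\pi_{0}$ of the separable metric space $\mathcal{P}_{\varepsilon_{n}}^{\pi}(X_{n}, SB)$ together with the finer structural input developed in \cite{WilletYu25}. It therefore suffices to prove: if $(G_{n}, f_{n,m})$ is a countable inverse system of countable abelian groups and $\varprojlim^{1} G_{n}$ is countable, then $\varprojlim^{1} G_{n} = 0$.

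To establish this, set $P := \prod_{n=1}^{\infty} G_{n}$, which is a Polish abelian group under the product topology. The map $\Delta : P \to P$ given by $(x_{n}) \mapsto (x_{n} - f_{n+1}(x_{n+1}))$ is a continuous homomorphism, and $\varprojlim^{1} G_{n} = P/\text{Im}(\Delta)$. As the continuous image of a Polish space, $\text{Im}(\Delta)$ is an analytic subgroup of $P$ and hence has the Baire property. If $P/\text{Im}(\Delta)$ is countable, then $P$ is a countable union of translates of $\text{Im}(\Delta)$, and the Baire category theorem forces $\text{Im}(\Delta)$ to be nonmeager. Pettis' theorem then implies that the nonmeager subgroup $\text{Im}(\Delta)$ contains a neighborhood of $0$, and since every neighborhood of $0$ in the product topology contains a set of the form $\{0\}^{n} \times \prod_{k > n} G_{k}$, we conclude that $\{0\}^{n} \times \prod_{k > n} G_{k} \subseteq \text{Im}(\Delta)$ for some $n$.

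Finally, this open containment forces $\text{Im}(\Delta) = P$: given $z = (z_{k}) \in P$, decompose $z = z' + z''$ with $z' = (z_{1}, \ldots, z_{n}, 0, 0, \ldots)$ and $z'' = (0, \ldots, 0, z_{n+1}, z_{n+2}, \ldots)$. The element $z''$ lies in $\text{Im}(\Delta)$ by the open containment, while a preimage of $z'$ under $\Delta$ is obtained by setting $x_{k} = 0$ for $k > n$ and back-solving $x_{k} = z_{k} + f_{k+1}(x_{k+1})$ for $k = n, n-1, \ldots, 1$. Thus $z \in \text{Im}(\Delta)$, giving $\text{Im}(\Delta) = P$ and $\varprojlim^{1} G_{n} = 0$, which is consistent with Proposition \ref{ML}. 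The principal obstacle is securing countability of each $KK_{\varepsilon_{n}}^{\pi}(X_{n}, SB)$, which genuinely requires the structural input from \cite{WilletYu25}; everything else is a soft Baire-category argument on the Polish group $P$.
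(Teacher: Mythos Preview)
The paper does not give a proof of this statement at all; it is simply quoted as Corollary~7.9 of \cite{WilletYu25}. There is therefore no argument in the present paper to compare against.

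Your proof is correct. The core of it---that $\varprojlim^{1}$ of a countable inverse system of countable abelian groups is either $0$ or uncountable---is the classical dichotomy going back to Gray \cite{Gray} and sharpened in \cite{Geo}, and your Baire-category/Pettis argument on the Polish group $\prod_n G_n$ is the standard modern route to it. The back-solving step showing that any finitely supported tuple lies in $\mathrm{Im}(\Delta)$ is clean and correct. You are also right to flag the countability of each $KK_{\varepsilon_{n}}^{\pi}(X_{n},SB)$ as the one place where genuine input from \cite{WilletYu25} is needed: separability of $\mathcal{P}_{\varepsilon_{n}}^{\pi}(X_{n},SB)$ alone does not suffice (a Cantor set is separable with uncountable $\pi_{0}$), so the countability relies on the structural analysis carried out there. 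With that granted, your argument is complete and is presumably close in spirit to how \cite{WilletYu25} establishes the result.
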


With Proposition \ref{invtop1} and Theorems \ref{WiYu1}, \ref{WiYu2}, we arrive at the main result of this section.

\newpage

\begin{theorem}\label{Topology}
	Let $A$ and $B$ be separable $C^{*}$-algebras. Then $KK(A,B)$ is homeomorphic to one of the following ten classes of topologies
	
	\begin{table}[h]
		\centering
		\begin{tabular}{|c|c|c|}
			\hline
			$KK_{\varepsilon_{n}}^{\pi}(X_{n},SB)$ is (ML) & $KK_{\varepsilon_{n}}^{\pi}(X_{n},B)$\; stabilizes & $KK(A,B)$ homeomorphic to                              \\ \hline
			Yes  & Yes        & $F$                   \\ \hline
			Yes  & Yes        & $\mathbb{N}$                       \\ \hline
			Yes & No         & $\mathcal{C}$                               \\ \hline
			Yes & No         &  $\mathbb{N} \times \mathcal{C}$  \\ \hline
			Yes & No         &  $\mathbb{N}^{\mathbb{N}}$ \\ \hline
			No  & Yes        & $F \times U$                      \\ \hline
			No  & Yes        & $\mathbb{N} \times U$                       \\ \hline
			No & No         & $\mathcal{C} \times U$                               \\ \hline
			No & No         &  $\mathbb{N} \times \mathcal{C} \times U$  \\ \hline
			No & No         &  $\mathbb{N}^{\mathbb{N}} \times U$ \\ \hline
		\end{tabular}
	\end{table} Where $F$ denotes some finite set and $U$ some uncountable set with the indiscrete topology.
\end{theorem}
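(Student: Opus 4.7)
The plan is to combine the three structural results already established: the topological space decomposition $G \cong G/\overline{\{0\}} \times \overline{\{0\}}$ from Proposition \ref{top grp iso}, the five-way classification of inverse limits of countable discrete abelian groups in Proposition \ref{invtop1}, and the identifications supplied by Theorems \ref{WiYu1} and \ref{WiYu2}. The ten rows of the table factor naturally as a product of two independent dichotomies, one for each factor of the decomposition.

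First I would apply Proposition \ref{top grp iso} to $G = KK(A,B)$ to get a homeomorphism
\[
KK(A,B) \;\cong\; KK(A,B)\big/\overline{\{0\}} \,\times\, \overline{\{0\}}.
\]
By Theorem \ref{WiYu1} the quotient is $KL(A,B) \cong \varprojlim KK_{\varepsilon_{n}}^{\pi}(X_{n},B)$ as topological groups, so Proposition \ref{invtop1} applies and gives exactly five possible homeomorphism types for this factor: a finite set $F$, $\mathbb{N}$, $\mathcal{C}$, $\mathbb{N}\times\mathcal{C}$, or $\mathbb{N}^{\mathbb{N}}$. The column labeled ``$KK_{\varepsilon_{n}}^{\pi}(X_{n},B)$ stabilizes'' in the table records whether we are in Case I (rows I.1, I.2 of Proposition \ref{invtop1}) or Case II (rows II.1, II.2, II.3); together with the cardinalities of the kernels $X_{n}$, these five subcases produce the five right-hand entries modulo the $U$ factor.

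Next I would analyze the factor $\overline{\{0\}}$. By Theorem \ref{WiYu1}, $\overline{\{0\}} \cong \varprojlim{}^{1} KK_{\varepsilon_{n}}^{\pi}(X_{n},SB)$ carries the indiscrete topology. Proposition \ref{ML} says that this $\varprojlim{}^{1}$ group vanishes exactly when the inverse system $\big(KK_{\varepsilon_{n}}^{\pi}(X_{n},SB)\big)$ satisfies the Mittag-Leffler condition. In the Mittag-Leffler case, $\overline{\{0\}} = 0$ is a point and the factor is trivial, yielding the first five rows of the table. In the non-Mittag-Leffler case, $\overline{\{0\}}$ is nonzero, so by Theorem \ref{WiYu2} it is uncountable; since it carries the indiscrete topology, it is homeomorphic to ``some uncountable set $U$ with the indiscrete topology'' in the sense of the theorem statement. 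This gives the remaining five rows by taking the product with $U$.

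The only real subtlety is checking that nothing in the first factor spuriously affects the second: the homeomorphism from Proposition \ref{top grp iso} is purely topological and does not require any compatibility between the quotient and $\overline{\{0\}}$ beyond the set-theoretic section, so the two factors may be analyzed independently. The rest is bookkeeping: combining the five topological types for $KL(A,B)$ with the binary ``$\overline{\{0\}}$ is trivial vs.\ uncountable indiscrete'' dichotomy produces exactly the ten rows listed. I expect the main (minor) obstacle to be stating cleanly that ``indiscrete uncountable'' is the right description of $\overline{\{0\}}$ without overclaiming a specific cardinality, since the theorem statement is agnostic about the precise size of $U$.
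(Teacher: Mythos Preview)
Your proposal is correct and follows essentially the same route as the paper: decompose $KK(A,B)$ via Proposition \ref{top grp iso}, identify the two factors using Theorem \ref{WiYu1}, then invoke Proposition \ref{invtop1} for the five $KL$ types and Proposition \ref{ML} together with Theorem \ref{WiYu2} for the $\overline{\{0\}}$ dichotomy. If anything, your write-up is more explicit than the paper's own proof, which leaves the appeal to Proposition \ref{invtop1} implicit in the table.
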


\begin{proof}
	It follows from proposition \ref{ML} and theorem \ref{WiYu2} that $\varprojlim^1 KK_{\varepsilon_{n}}^{\pi}(X_{n},SB)$ vanishes if and only if the inverse system $KK_{\varepsilon_{n}}^{\pi}(X_{n},SB)$ is Mittag-Leffler. From Proposition \ref{top grp iso} and Theorem \ref{WiYu1}, it follows that $$KK(A,B) \cong \lim_{\leftarrow}{}\!^1 KK_{\varepsilon_{n}}^{\pi}(X_{n},SB) \times \lim_{\leftarrow} KK_{\varepsilon_{n}}^{\pi}(X_{n},B),$$ as topological groups.
\end{proof}

The following follows from corollary \ref{totdisc lemma} and answers question 7.1 from $\cite{Dad}$.

\begin{proposition}\label{Ror}
	The R{\o}rdam group, $KL(A,B) \cong \varprojlim KK_{\varepsilon_{n}}^{\pi}(X_{n},B)$, is totally disconnected.
\end{proposition}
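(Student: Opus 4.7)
The plan is to assemble the proof directly from the ingredients already established in the paper, without requiring any new computation. Essentially, every step of the reduction has been carried out in the earlier sections; all that remains is to string them together in the right order.

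First, I would invoke Theorem \ref{WiYu1} to identify $KL(A,B)$ with the inverse limit $\varprojlim KK_{\varepsilon_{n}}^{\pi}(X_{n},B)$ as topological groups. Since the indexing sequence $(X_n,\varepsilon_n)$ is countable and each $KK_{\varepsilon_{n}}^{\pi}(X_{n},B)$ is a countable discrete abelian group (as guaranteed by the setup in \cite{WilletYu25}), this places us squarely in the framework of Section 2 through Section 4: a countable inverse system of discrete countable abelian groups.

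Next, I would reduce to the case of an inverse system with surjective connecting maps via Proposition \ref{surj rest}, which tells us that replacing each $G_n = KK_{\varepsilon_{n}}^{\pi}(X_{n},B)$ with $G_n^\circ := \bigcap_{m \geq n} \mathrm{Im}(f_{n,m})$ yields an inverse system whose inverse limit is homeomorphic to the original. Since total disconnectedness is a topological invariant, it suffices to prove the statement for this modified system.

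Having reduced to the surjective case, I would apply the kernel decomposition of Remark \ref{totdisc} to write, for each $n$, the group $G_n^\circ$ (up to homeomorphism) as a product $X_1 \times \cdots \times X_n$ where $X_k := \ker(f_{k-1,k}|_{G_k^\circ})$, and to identify the whole inverse limit homeomorphically with $\varprojlim X_1 \times \cdots \times X_n$. At this point I simply quote Lemma \ref{totdisc lemma}, which separates any two distinct points by a clopen set constructed from a coordinate where they disagree, to conclude that this space is totally disconnected.

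There is essentially no obstacle here: the entire content is wrapped up in the earlier infrastructure, and the proof amounts to a three-line chain of homeomorphisms followed by a citation of Lemma \ref{totdisc lemma}. The only thing I would be slightly careful about is making explicit that passing from $(G_n)$ to $(G_n^\circ)$ preserves the topology (but not necessarily the group structure), which is fine because total disconnectedness is purely a property of the underlying topological space.
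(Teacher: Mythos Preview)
Your proposal is correct and follows exactly the route the paper takes: the paper simply records that the result ``follows from'' Lemma~\ref{totdisc lemma}, and your write-up merely makes explicit the chain of reductions (Theorem~\ref{WiYu1}, Proposition~\ref{surj rest}, Remark~\ref{totdisc}) that are implicit in invoking that lemma. There is nothing to add or correct.
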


{\small }

\addresses

\end{document}